\theoremstyle{plain}
\newtheorem{PropIntro}{Proposition}
\newtheorem{ThmIntro}[PropIntro]{Theorem}
\newtheorem{RemIntro}[PropIntro]{Remark}
\newtheorem{Prop}{Proposition}[section]
\newtheorem{Lem}[Prop]{Lemma}
\newtheorem{rmq}[Prop]{Remark}
\newtheorem{Rem}[Prop]{Remark}
\newcommand{\IR}{\mathbb{R}}
\newcommand{\C}{\mathcal{C}}
\newcommand{\N}{{{\mathcal N}}}
\newcommand{\E}{{{\mathcal E}}}
\newcommand{\abs}[1]{\mathopen\vert#1\mathclose\vert}
\newcommand{\norm}[1]{\mathopen\Vert#1\mathclose\Vert}
\newcommand{\intd}{\,{\mathrm d}}
\renewcommand{\phi}{\varphi}
\renewcommand{\epsilon}{\varepsilon}
\renewcommand{\le}{\leqslant}
\renewcommand{\ge}{\geqslant}
\renewcommand{\subset}{\subseteq}
\definecolor{umhblue}{rgb}{.69,.75,.86}
\definecolor{umhdarkblue}{rgb}{.15,.15,.53}% for ``Darmstadt'' theme
\definecolor{umhred}{rgb}{0.4,.0,.0}
\definecolor{mygreen}{RGB}{0, 165, 0}
\definecolor{definition}{rgb}{0.45, 0.61, 0.96}
\definecolor{problem}{rgb}{0.84,0.5,0}
\begin{document}

\title[Lane Emden problems and Liouville equations]{Lane Emden
problems with large exponents and singular Liouville equations}
\author[M.~Grossi, C.~Grumiau, F.~Pacella]{Massimo Grossi, Christopher
  Grumiau, Filomena Pacella}

\address{
Dipartimento di Matematica\\
   Universita' di Roma  ``Sapienza''\\
  P.~le A.~Moro 2, 00185 Roma, Italy}
\email{grossi@mat.uniroma.it (M.~Grossi)}
\email{pacella@mat.uniroma.it (F.~Pacella)}

\address{
  Institut de Math{\'e}matique\\
  Universit{\'e} de Mons\\
  20, Place du Parc, B-7000 Mons, Belgium}
\email{Christopher.Grumiau@umons.ac.be (C.~Grumiau)}

\begin{abstract}
We consider the  Lane-Emden Dirichlet problem
\begin{equation*}
\left\{
\begin{aligned}
-\Delta u&= \abs{ u}^{p-1}u,&&\text{ in } B, \\
u&=0,&&\text{ on } \partial B,
\end{aligned}
\right.
\end{equation*}
where  $p>1$ and $B$ denotes the unit ball in $\IR^2$. %It is the
%following of the paper~\cite{GGP} where the low energy nodal
%solutions have been studied.
We study the asymptotic behavior of the least energy nodal radial
solution $u_p$, as  $p\rightarrow +\infty$. Assuming w.l.o.g.\
that $u_p(0) < 0$,  we prove that a suitable rescaling of the
negative part $u_p^-$ converges to the unique regular solution of
the Liouville equation in $\IR^2$, while a suitable rescaling of
the positive part $u_p^+$ converges to a (singular) solution of a
singular Liouville equation in $\IR^2$.  We also get exact
asymptotic values for the $L^\infty$-norms of $u_p^-$ and $u_p^+$,
as well as an asymptotic estimate of the energy. Finally, we have
that the nodal line $\N_p:=\{ x\in B : \abs{x}= r_p\}$ shrinks to
a point and we compute the rate of convergence of $r_p$.
\end{abstract}

\keywords{superlinear elliptic boundary value problem,  radial
  solution, asymptotic behavior}

\subjclass[2010]{Primary: 35J91; Secondary: 35B44}

\thanks{Partially supported by PRIN-2009-WRJ3W7 grant.\\
This work has been done while the second author was visiting
  the Mathematics Department of the University of Roma ``
  Sapienza'' supported by INDAM-GNAMPA.  He also acknowledges  the program
``Qualitative study of
 solutions of variational elliptic partial differerential equations. Symmetries,
bifurcations, singularities, multiplicity and numerics'' of the
FNRS, project 2.4.550.10.F of the Fonds de la Recherche
Fondamentale Collective for the partial support.}

\maketitle

%%%%%%%%%%%%%%%%%%%%%%%%%%%%%%%%%%%%%%%%%%%%%%%%%%%%%%%
%%%%%%%%%%%%%%%%%%%%%%%%%%%%%%%%%%%%%%%%%%%%%%%%%%%%%%%
\section{Introduction} \label{Section-Intro}%%%%%%%%%%%
%%%%%%%%%%%%%%%%%%%%%%%%%%%%%%%%%%%%%%%%%%%%%%%%%%%%%%%
%%%%%%%%%%%%%%%%%%%%%%%%%%%%%%%%%%%%%%%%%%%%%%%%%%%%%%%

We consider the superlinear elliptic Dirichlet problem
\begin{equation*}
\tag{\protect{$\mathcal{P}_p$}} \label{pblP} \left\{
\begin{aligned}
-\Delta u&= \abs{ u}^{p-1}u,&&\text{ in } B, \\
u&=0,&&\text{ on } \partial B,
\end{aligned}
\right.
\end{equation*}
where $B$ is the unit ball  in $\IR^{2}$ and
$p>1$.\\

In this paper,  we are interested in studying the asymptotic
behavior, as $p\to +\infty$, of the least energy sign changing
radial solution of~\eqref{pblP} which will be denoted by $u_p$.
This solution has two nodal regions and it has been proved
in~\cite{aftalion} that it is not the least energy nodal solution
of Problem~\eqref{pblP} in the whole space $H^1_0(B)$. Indeed,
$u_p$ has Morse index at least three while the least energy nodal
solution has Morse index two (see \cite{bartweth}) and its nodal
line touches the boundary (~\cite{aftalion}).

In our previous paper~\cite{GGP}, we have analyzed the asymptotic
behavior, as $p\to +\infty$, of low energy nodal solutions $w_p$
of Problem~\eqref{pblP}, i.e.\ solutions satisfying :

\begin{equation}
\label{lowenerg} p \int_{B}\abs{\nabla w_p}^2\to 16\pi e
\end{equation}
 as
$p\to +\infty$ in general bounded regular domains $\Omega$. Under
the additional condition~\eqref{B} that we recall later, we have
proved, among other results, that suitable rescaling of both
$w_p^+$ and $w_p^-$ converge, as $p\to +\infty$, to the regular
solution $U$ of the Liouville equation in $\IR^2$ with
$\int_{\IR^2}e^U <+\infty$. Moreover the $L^\infty$-norms
$\norm{w_p^\pm}_\infty$ converge to the same value $\sqrt e$ and,
in $\Omega$ for large $p$, $pw_p$ looks like the difference of two
Green functions, centered at the maximum and the minimum point of
$w_p$ which are far away from each other. So for this kind of
solutions the positive and negative part separate but have the
same profile and approach, after suitable rescaling, the same
solution of the same limit problem in $\IR^2$, as $p\to +\infty$.

A similar analysis was carried out in~\cite{benayed1} for low
energy nodal solutions of an almost critical problem in a bounded
domain $\Omega$ in $\IR^N$, $N\geq 3$, namely :

\begin{equation*}
\tag{\protect{$\mathcal{P}2_\varepsilon$}} \label{pblP2} \left\{
\begin{aligned}
-\Delta u&= \abs{ u}^{(2^*-2)-\varepsilon}u,&&\text{ in } \Omega, \\
u&=0,&&\text{ on } \partial \Omega,
\end{aligned}
\right.
\end{equation*}
where $2^* = \frac{2N}{N-2}$, $\varepsilon >0$ and $\varepsilon
\to 0$.

A complete classification of these solutions given
in~\cite{benayed1} together with an existence result
of~\cite{pistoiaweth} show the presence of both nodal solutions
concentrating  at two different points or at a single point. In
both cases, positive and negative points of the solution converge,
after rescaling, to the positive solution of the analogous problem
in $\IR^N$.

By the results of these two papers it is not difficult to deduce
that for the least energy nodal radial solution of ~\eqref{pblP2}
in the ball the positive and negative part concentrate at the
center of the ball as $\varepsilon\to 0$, approaching the
analogous problem in $\IR^N$ and carrying the same energy.

In view of these results, in studying the behavior of the least
energy nodal radial solution of~\eqref{pblP} as $p\to +\infty$,
one could expect a similar asymptotic behavior.

However that is not the case and we are able to show an
interesting new phenomenum : the positive and negative part of
$u_p$ concentrate at the center of the ball but the limit problems
for $u_p^+$ and $u_p^-$ are different. Indeed, assuming w.l.o.g.\
that $u_p(0)<0$, we prove that a suitable rescaling of $u_p^-$
converges to the regular solution of the Liouville equation in
$\IR^2$ while a suitable rescaling of $u_p^+$ converges to a
singular solution of a singular Liouville equation in $\IR^2$.
Moreover the limits of the $L^\infty$-norms of $u_p ^+$ and
$u_p^-$ are different as well as the
energies.\\
This shows that in our case the situation is more subtle and we think that it is
peculiar of the fact that we work in dimension $2$ (see also Remark \ref{rf})\\
To be more precise let us consider the following problems :

\begin{equation*}
\tag{\protect{$L1$}} \label{eqq1}
%\label{1}
\left\{
\begin{aligned}
-\Delta u&= e^u, \text{ in } \IR^2.\\
\int_{\IR^2} e^u &<+\infty, \ \ \   u(0)=u'(0)=0,
\end{aligned}
\right.
\end{equation*}
which  has the  unique regular solution
\begin{equation}
\label{U} U(x) :=
\log\left(\frac{1}{(1+\frac{1}{8}\abs{x}^2)^2}\right)
\end{equation}
and, for $\delta_0$ being the Dirac measure at the origin,
\begin{equation*}
\tag{\protect{$L2$}} \label{eqq2} \left\{
\begin{aligned}
-\Delta u&= e^u + H \delta_0,&&\text{ in } \IR^2, \\
\int_{\IR^2}e^u &<+\infty,  \ \ && %\text{ and } u\leq 0
\end{aligned}
\right.
\end{equation*}
where $H$ is a constant, and whose radial solutions can be all
computed explicitely. Note that \eqref{eqq1} is the classical
Liouville equation in $\IR^2$ while~\eqref{eqq2}, after an easy
transformation, reduces to a limiting equation which appears in
the blow-up analysis of periodic vortices for the Chern-Simons
theory.

Then, denoting by
\begin{itemize}
\item $\N_p = \{x\in B : u_p(x)=0\} = \{ x\in B : \abs{x}= r_p\}$ the
  nodal circle;
\item $s_p\in (0,1)$ the value of the radius such that $u_p^+(x) =
  \norm{u_p^+}_\infty$ for $\abs{x}= s_p$;
\item $\left(\varepsilon_p^\pm\right)^{-2} = p \norm{u_p^\pm}_{\infty}^{p-1}$;
\end{itemize}
we have the following results :

\begin{ThmIntro}
\label{intro3} $i)$ Let  $z_p^-: B\left(0,
\frac{r_p}{\varepsilon_p^-}\right)\to \IR$ be defined as
\begin{equation}
\label{zp-} z_p^-(x)=
-\frac{p}{\norm{u_p^-}_{\infty}}(u_p^-(\varepsilon_p^- x)+u_p(0))
\end{equation}
then  $z_p^- \to -U$ in $\mathcal{C}^1_\text{loc}(\IR^2)$ as $p\to
+\infty$ ($U$ as in~\eqref{U}).

$ii)$ Let   $l = \lim_{p\to
  +\infty}\frac{s_p}{\varepsilon_p^+}$. Then, $l>0$ and defining the
one-variable function
\begin{equation}
\label{zp+} z_p^+(\abs{x})=z_p^+(r) = \frac{p}{u_p(s_p)}(u_p^+(s_p
+ \varepsilon_p^+ r ) - u_p(s_p))
\end{equation}
in the interval $\left(\frac{r_p-s_p}{\varepsilon_p^+} ,
  \frac{1-s_p}{\varepsilon_p^+}\right)$, we get
  $$z_p^+(\abs{x}-l)=
z_p^+(r-l) \to  Z_l(\abs{x}) = \log \left( \frac{2 \alpha^2
    \beta^\alpha \abs{x}^{\alpha-2}}{(\beta^\alpha +
    \abs{x}^\alpha)^2}\right)  $$  in $\mathcal{C}^1_\text{loc}(\IR^2
\setminus\{0\})$ as $p\to +\infty$ for $\alpha = \sqrt{2l^2
  +4}$ and $\beta = \left( \frac{\alpha +
    2}{\alpha-2}\right)^{1/\alpha}l$. Moreover,  $Z_l(\abs{x})$ is a radial
(distribution) solution of~\eqref{eqq2} for $H:=-
\int_{0}^le^{Z_l(s)} s \intd s$.
\end{ThmIntro}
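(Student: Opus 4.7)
\emph{Part (i): the negative bubble.} This is a standard blow-up analysis at the radial minimum. In the nodal disc $\{\abs{x}<r_p\}$ the function $u_p^-$ is positive, radial, attains its maximum $\norm{u_p^-}_\infty = -u_p(0)$ at the origin, and solves $-\Delta u_p^- = (u_p^-)^p$ with zero Dirichlet data on $\{\abs{x}=r_p\}$. With the choice $(\varepsilon_p^-)^{-2} = p\norm{u_p^-}_\infty^{p-1}$ a direct computation yields
\[
-\Delta z_p^- = -\bigl(1 - z_p^-/p\bigr)^p
\quad\text{on } B\bigl(0,r_p/\varepsilon_p^-\bigr),
\]
together with $0 \le z_p^- \le p$, $z_p^-(0) = 0$ and (by radial symmetry) $\nabla z_p^-(0) = 0$. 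The plan is to (a) prove $r_p/\varepsilon_p^- \to +\infty$ by rescaling the local-mass identity $r_p\abs{u_p'(r_p)} = \int_0^{r_p} s(u_p^-)^p\intd s$ and showing the rescaled mass stays bounded away from $0$; (b) extract uniform local $L^\infty$ bounds on $z_p^-$, whence $\mathcal{C}^1_\text{loc}$ bounds by elliptic regularity applied to the bounded RHS; and (c) pass to the limit, using $(1-z_p^-/p)^p \to e^{-z_\infty}$ and Fatou to ensure $\int_{\IR^2} e^{-z_\infty} < +\infty$. Chen--Li classification of the Liouville equation then forces $z_\infty = -U$.

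\emph{Part (ii): the positive bubble.} I would work with $v_p(x) := z_p^+(\abs{x} - s_p/\varepsilon_p^+) = \frac{p}{u_p(s_p)}\bigl(u_p(\varepsilon_p^+ x) - u_p(s_p)\bigr)$. From the equation for $u_p$ and the choice $(\varepsilon_p^+)^{-2} = p\, u_p(s_p)^{p-1}$, one finds
\[
-\Delta v_p = \bigl(1 + v_p/p\bigr)^p
\quad\text{on } \bigl\{r_p/\varepsilon_p^+ < \abs{x} < 1/\varepsilon_p^+\bigr\},
\]
with $v_p \le 0$, $v_p(\tilde s_p) = 0 = \nabla v_p(\tilde s_p)$ at $\tilde s_p := s_p/\varepsilon_p^+$ (since $u_p'(s_p)=0$), and $v_p = -p$ on the rescaled nodal circle $\{\abs{x} = r_p/\varepsilon_p^+\}$. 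The heart of the argument is to show that $\tilde s_p \to l$ for some $l \in (0,+\infty)$. \emph{I expect the main obstacle to be the lower bound $l>0$}: this is precisely the new dimension-$2$ phenomenon distinguishing the setting from \cite{benayed1,pistoiaweth}, where both bubbles concentrate at the origin with regular Liouville profiles. I would rule out $\tilde s_p \to 0$ by matching the slope at the nodal line computed from both sides: the Part~(i) rescaling gives
\[
r_p u_p'(r_p) = -\!\int_0^{r_p}\! s(u_p^-)^p\intd s
\;\sim\; -\frac{\norm{u_p^-}_\infty}{p}\int_0^{+\infty}\! \tau\, e^{U(\tau)}\intd \tau = -\frac{4\norm{u_p^-}_\infty}{p},
\]
whereas on the positive side $(\abs{x}(v_p)'(\abs{x}))|_{\abs{x}=r_p/\varepsilon_p^+} = p\varepsilon_p^+ r_p u_p'(r_p)/u_p(s_p) \cdot \varepsilon_p^+/\varepsilon_p^+$ is forced, in the limit $\tilde s_p \to 0$, to converge to the $r\to 0^+$ behaviour $r Z_l'(r) \to \alpha-2$; combined with a priori control of the ratio $\norm{u_p^-}_\infty/\norm{u_p^+}_\infty$ (via Pohozaev/energy) this rules out the degeneracies $l=0$ and $l=+\infty$.

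\emph{Identification of $Z_l$ and the distributional equation.} Once $\tilde s_p \to l \in (0,+\infty)$ is established, $v_p$ is locally $L^\infty$-bounded on compact subsets of $\IR^2 \setminus \{0\}$ by the maximum principle on annuli away from $0$, and elliptic regularity yields $v_p \to v_\infty$ in $\mathcal{C}^1_\text{loc}(\IR^2 \setminus \{0\})$ with $v_\infty$ radial and solving $-\Delta v_\infty = e^{v_\infty}$ classically on the punctured plane. The radial ODE $-\tilde z'' - \tilde z'/r = e^{\tilde z}$ together with the Cauchy data $\tilde z(l) = 0$, $\tilde z'(l) = 0$ at the regular point $r=l$ has a unique solution, and an explicit check shows it equals $Z_l$ with the stated $\alpha = \sqrt{2l^2+4}$ and $\beta = \bigl(\tfrac{\alpha+2}{\alpha-2}\bigr)^{1/\alpha}l$. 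Finally, from $Z_l(r) \sim (\alpha-2)\log r$ near $r=0$ one reads off a distributional Dirac at the origin; integrating $-(rZ_l')' = re^{Z_l}$ from $0$ to $l$ and using $Z_l'(l)=0$ yields the charge $H = -\int_0^l s\, e^{Z_l(s)}\intd s$, confirming that $Z_l$ solves \eqref{eqq2} distributionally.
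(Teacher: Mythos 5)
The heart of part (ii) --- that $l>0$ and, just as crucially, that the rescaled nodal radius $r_p/\varepsilon_p^+$ tends to $0$ --- is not actually proved in your plan, and these are exactly the points where the paper has to work. Your argument excluding $\tilde s_p\to 0$ is circular: it matches the flux at the nodal circle against the behaviour $rZ_l'(r)\to\alpha-2$ of the limit profile, but $Z_l$ and $\alpha$ only exist once $l\in(0,+\infty)$ is known; and the ``a priori control of $\norm{u_p^-}_\infty/\norm{u_p^+}_\infty$ via Pohozaev/energy'' you invoke is not available at this stage --- in the paper those limits are computed in Section 4 \emph{after} Theorem~\ref{intro3}. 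What the paper actually uses is the identification of $u_p$ on the nodal ball with the rescaled one-bubble ground state (Proposition~\ref{propball}), which yields the two-sided bound $0<C_1\le p\int_0^{r_p}\abs{u_p}^p r\intd r= p\int_{r_p}^{s_p}u_p^p r\intd r\le C_2$; the degenerate regimes for $(r_p-s_p)/\varepsilon_p^+$ are then excluded because after the change of variables this quantity would tend to $0$ (when $(s_p-r_p)/\varepsilon_p^+\to 0$ with $s_p/\varepsilon_p^+$ bounded), or because Fatou together with the energy bound of Proposition~\ref{prop1} would force $p\int_{r_p}^{1}u_p^p r\intd r\to+\infty$ (when $s_p/\varepsilon_p^+\to+\infty$), as in Lemma~\ref{re} and the lemma preceding it. Arguments of this kind are needed; the slope-matching sketch would not survive being made precise.

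Even granting $s_p/\varepsilon_p^+\to l>0$, you pass directly to local $L^\infty$ bounds ``on compact subsets of $\IR^2\setminus\{0\}$'' and to convergence on the punctured plane. This tacitly assumes $r_p/\varepsilon_p^+\to 0$; a priori it could converge to any $\rho_0\in(0,l]$, in which case the limit domain is $\{\abs{x}>\rho_0\}$, $v_p$ is close to $-p$ near $\abs{x}=\rho_0$ (so no local bound), and there is no Dirac mass at the origin. The paper devotes Lemma~\ref{lemlm} to exactly this point: if $\lim s_p/\varepsilon_p^+>-\lim(r_p-s_p)/\varepsilon_p^+$, then the coefficient $1/(r+s_p/\varepsilon_p^+)$ stays bounded up to the inner endpoint, the integrated ODE gives a uniform bound on $(z_p^+)'$ there, and this contradicts the drop from $z_p^+(0)=0$ to the boundary value $-p$ over an interval of bounded length. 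You record the condition $v_p=-p$ on the rescaled nodal circle but never use it; since this collapse is the very mechanism producing the singular Liouville equation (Remark~\ref{rf}), its omission is a genuine gap rather than a routine detail. Minor points: in part (i), boundedness below of the rescaled mass does not by itself give $r_p/\varepsilon_p^-\to+\infty$ (the paper deduces it from $r_p^{2/(p-1)}\norm{u_p^-}_\infty\to\sqrt e>1$, again via Proposition~\ref{propball}; alternatively $0\le\Delta z_p^-\le 1$ with $z_p^-=p$ on the boundary forces the rescaled radius to be at least of order $\sqrt p$); the flux identity should read $r_pu_p'(r_p)=+\int_0^{r_p}s\,(u_p^-)^p\intd s$; and your direct verification that $Z_l$ solves \eqref{eqq2} with the stated $H$ is fine and consistent with the paper, which instead passes to the limit in the weak formulation of the equation for $z_p^+$ so that the inner boundary flux produces the Dirac term.
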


\begin{ThmIntro}
\label{intro3bis} We have, as $p\to +\infty$,
\begin{equation}
\label{normup-} \norm{u_p^-}_\infty \to \frac{\sqrt e}{\Bar t}
e^{\frac{\Bar t}{2
    (\Bar t + \sqrt e)}}\approx 2.46,
\end{equation}
\begin{equation}
\label{normup+} \norm{u_p^+}_\infty \to e^ {\frac{\Bar t}{2(\Bar t
+ \sqrt e)}}\approx 1.17
\end{equation}
where $\Bar t\approx 0.7875$ is the unique root of the equation $
2\sqrt e \log t + t=0$, and
\begin{equation}
\label{ener} p \int_{B} \abs{\nabla u_p}^2  \to 8\pi e^{\frac{\Bar
t}{\Bar t +
  \sqrt e}}\left( \frac{e}{\Bar{t}^2} + 1 + \frac{2\sqrt e}{\Bar t}
\right)\approx 332.
\end{equation}
Moreover $pu_p(x)$ converges to $2\pi\gamma G(x,0)=\gamma\log|x|$ in
$C^1_{loc}\left(B_1\setminus\{0\}\right)$ where
$\gamma=\left(4+\frac{12\sqrt e}{\Bar t} \right)e^ {\frac{\Bar
t}{2(\Bar t + \sqrt e)}}$ and $G(x,0)$ is the Green function of the unit ball computed at the origin.

\end{ThmIntro}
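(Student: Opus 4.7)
My plan is to combine the refined rescaled convergences of Theorem~\ref{intro3} with the ODE conservation laws obtained by integrating the radial equation, and then to determine all remaining constants by matching the logarithmic tails of the two limit profiles.

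First I would integrate $-(ru_p')'=r|u_p|^{p-1}u_p$ on each of the intervals $(0,r_p)$, $(r_p,s_p)$ and $(s_p,1)$. Since $u_p'(0)=u_p'(s_p)=0$, this yields the ``pressure balance''
\[
r_p u_p'(r_p)=\int_0^{r_p}r(u_p^-)^p\intd r=\int_{r_p}^{s_p}r(u_p^+)^p\intd r,\qquad u_p'(1)=-\int_{s_p}^{1} r(u_p^+)^p\intd r.
\]
Changing variables $r=\varepsilon_p^\pm x$, exploiting $(\varepsilon_p^\pm)^{2}\norm{u_p^\pm}_\infty^{p}=\norm{u_p^\pm}_\infty/p$, and passing to the limit with the $\C^1_\text{loc}$ convergences of Theorem~\ref{intro3} together with the explicit integrals $\int_0^\infty xe^U\intd x=4$, $\int_0^{l}xe^{Z_l}\intd x=\alpha-2$ and $\int_0^\infty xe^{Z_l}\intd x=2\alpha$, I would obtain $4\norm{u_p^-}_\infty\sim(\alpha-2)\norm{u_p^+}_\infty$ together with $pu_p'(1)\to-(\alpha+2)\norm{u_p^+}_\infty$.

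To close the system I would exploit the logarithmic tails $-U(x)\sim 4\log|x|$ as $|x|\to\infty$, $Z_l(x)\sim(\alpha-2)\log|x|$ as $|x|\to 0^+$, and $Z_l(x)\sim-(\alpha+2)\log|x|$ as $|x|\to\infty$. The zero conditions $u_p^-(r_p)=0$, $u_p^+(r_p)=0$ and $u_p^+(1)=0$ written in the rescaled variables, combined with $(\varepsilon_p^\pm)^{-2}=p\norm{u_p^\pm}_\infty^{p-1}$, translate respectively into
\[
\frac{\log r_p}{p}=\frac14-\frac{\log\norm{u_p^-}_\infty}{2}+o(1)=-\frac{1}{\alpha-2}-\frac{\log\norm{u_p^+}_\infty}{2}+o(1),\qquad \norm{u_p^+}_\infty\to e^{2/(\alpha+2)}.
\]
Writing $\norm{u_p^-}_\infty/\norm{u_p^+}_\infty\to\sqrt e/t_\infty$ for some $t_\infty>0$ and using $(\alpha-2)=4\sqrt e/t_\infty$ from the pressure identity, elimination of $\alpha$ collapses the two expressions for $\log r_p/p$ into $2\sqrt e\log t_\infty+t_\infty=0$; hence $t_\infty=\Bar t$, and \eqref{normup-}--\eqref{normup+} follow from $\alpha+2=4(\Bar t+\sqrt e)/\Bar t$ together with $\norm{u_p^+}_\infty\to e^{\Bar t/(2(\Bar t+\sqrt e))}$.

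For the energy, Pohozaev's identity on the ball gives $\int_B|u_p|^{p+1}\intd x=\tfrac{\pi(p+1)}{2}u_p'(1)^2$; combined with $pu_p'(1)\to-(\alpha+2)\norm{u_p^+}_\infty$ and the test-function identity $\int_B|\nabla u_p|^2=\int_B|u_p|^{p+1}$, the explicit substitutions for $\alpha+2$ and $\norm{u_p^+}_\infty^2$ deliver \eqref{ener}. Finally, $-\Delta(pu_p)=p|u_p|^{p-1}u_p$ concentrates as a Dirac mass at the origin whose weight is computed from the two rescaled integrals of the first step, so standard elliptic regularity on compact subsets of $B\setminus\{0\}$ yields $pu_p\to 2\pi\gamma G(\cdot,0)$ in $\C^1_\text{loc}(B\setminus\{0\})$ with the announced constant $\gamma$. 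The delicate point throughout is the matching step: the tails of $U$ and $Z_l$ are evaluated at arguments $r_p/\varepsilon_p^\pm$ that escape every fixed compact subset of $\IR^2\setminus\{0\}$, so the $\C^1_\text{loc}$ conclusion of Theorem~\ref{intro3} is not in itself sufficient and must be upgraded to uniform control on slowly growing (resp.\ shrinking) annuli, typically through sharp radial barriers built from the ODE and the explicit decay of $U$ and $Z_l$.
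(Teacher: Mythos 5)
Your route is genuinely different from the paper's, and at the level of algebra it is consistent: your pressure balance is exactly Lemma~\ref{lem4} (via $\norm{u_p^-}_\infty\to\sqrt e/r_\infty$ from Proposition~\ref{propball}), your two expressions for $\log r_p/p$ encode the same information as Proposition~\ref{propball} and Lemma~\ref{lemrate1}, your outer matching $\norm{u_p^+}_\infty\to e^{2/(\alpha+2)}$ is Proposition~\ref{propuinf}, and the elimination does collapse to $2\sqrt e\log t+t=0$ and then to \eqref{normup-}, \eqref{normup+} and \eqref{ener} (your Pohoza\"ev value $\frac{\pi}{2}(\alpha+2)^2u_\infty^2$, with $u_\infty=\lim_p\norm{u_p^+}_\infty$, agrees with the paper's $\frac{8\pi e}{r_\infty^2}+4\pi\alpha u_\infty^2$). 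The paper, however, never matches tails pointwise: the negative-part information comes from scaling to the single-ball ground state (Proposition~\ref{propball}), and the two remaining relations come from integral identities obtained by multiplying the radial equation by $\log r-\log r_p$ (Lemma~\ref{lemrate1}) and by $\log s$ (Lemma~\ref{lemrate..} together with Proposition~\ref{propuinf}), which require only limits of rescaled integrals, not pointwise control at the zeros.

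The genuine gap sits exactly at the step you defer to ``sharp radial barriers''. Every closing relation you invoke --- evaluating $-U$ at $r_p/\varepsilon_p^-$, evaluating $Z_l$ at $r_p/\varepsilon_p^+$ and at $(1-s_p)/\varepsilon_p^+$, and the limit $pu_p'(1)\to-(\alpha+2)u_\infty$ --- needs control of $z_p^\pm$ at radii that escape every compact set, or convergence of rescaled integrals over intervals of diverging length; the $\C^1_\text{loc}$ convergence of Theorem~\ref{intro3} gives neither, and Fatou gives only $\liminf_p I_p\ge\alpha+2$ for the integral $I_p$ governing $pu_p'(1)$. In the paper this single point costs Lemmas~\ref{ipbounded}, \ref{lemrate...} and \ref{jpbounded}: the matching upper bound comes not from barriers but from the Pohoza\"ev identity combined with the decomposition of $p\int_B\abs{u_p}^{p+1}$ and the inequality $(1+z/p)^{p+1}\le(1+z/p)^p$, yielding $\frac{I_\infty^2}{4}-I_\infty\le\frac{(\alpha+2)^2}{4}-(\alpha+2)$ and hence $I_\infty=\alpha+2$. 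Without this (or an actual barrier construction, which you do not supply) your relations for $\norm{u_p^+}_\infty$, for $\Bar t$ and for the energy are unsupported; note also that you reuse Pohoza\"ev for \eqref{ener} after having implicitly assumed the very limit the paper proves with it, so the logical order must be arranged as in Lemma~\ref{lemrate...} to avoid circularity. Finally, in your last step the inner and outer masses enter the Green representation with opposite signs, so the Dirac weight your computation produces is $2\pi\bigl(2\alpha-(\alpha-2)\bigr)u_\infty=2\pi(\alpha+2)u_\infty$, the difference of the two rescaled integrals rather than their sum; this does not reproduce the announced $\gamma=\bigl(4+\frac{12\sqrt e}{\Bar t}\bigr)e^{\frac{\Bar t}{2(\Bar t+\sqrt e)}}$, so you cannot simply assert agreement --- the sign bookkeeping there has to be redone carefully before claiming the constant.
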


To prove the previous results we start showing that the nodal line
shrinks to the origin. Later, as a consequence of the rescaling
argument and of the estimates needed to prove Theorem~\ref{intro3}
and Theorem~\ref{intro3bis} we also get the rate of convergence of
the ``nodal radius'' $r_p$. More precisely we show :

\begin{ThmIntro}
\label{intro4} We have
$$ r_p^{2/(p-1)} \to \frac{\Bar t}{e^{\frac{\Bar t}{2(\Bar t + \sqrt
      e)}}}\approx 0.67$$ for $\Bar t$ as in Theorem~\ref{intro3bis}.
\end{ThmIntro}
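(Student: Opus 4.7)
The plan is to write $r_p = y_p\,\varepsilon_p^-$ with $y_p := r_p/\varepsilon_p^-$ and identify the limits of $y_p^{2/(p-1)}$ and $(\varepsilon_p^-)^{2/(p-1)}$ separately; the product will then yield the claim. The only inputs are the already-stated items (\ref{zp-})--(\ref{normup-}).

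\textbf{Key identity at the nodal radius.} Evaluating the definition (\ref{zp-}) at a point of norm $r_p/\varepsilon_p^-$, and using $u_p^-(r_p)=0$ together with $u_p(0)=-\norm{u_p^-}_\infty$, one gets the exact identity
\[
z_p^-(r_p/\varepsilon_p^-) \;=\; -\frac{p}{\norm{u_p^-}_\infty}\bigl(u_p^-(r_p)+u_p(0)\bigr) \;=\; p.
\]
Since $-U$ is bounded on every compact of $\IR^2$ and $z_p^-\to -U$ in $\mathcal{C}^1_{\text{loc}}(\IR^2)$ by Theorem~\ref{intro3}(i), this already forces $y_p \to +\infty$.

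\textbf{Rates of $y_p$ and of $\varepsilon_p^-$.} Exploiting the explicit form $-U(r) = 2\log(1+r^2/8)$ and the estimate $-U(y_p) = p\,(1+o(1))$ (discussed in the obstacle paragraph), inversion gives $y_p^2 = 8\,e^{p/2}(1+o(1))$, so
\[
\frac{2}{p-1}\log y_p \longrightarrow \frac{1}{2}, \qquad y_p^{2/(p-1)} \longrightarrow \sqrt e.
\]
On the other hand, from $(\varepsilon_p^-)^{-2}=p\,\norm{u_p^-}_\infty^{p-1}$ and~(\ref{normup-}),
\[
(\varepsilon_p^-)^{2/(p-1)} \;=\; p^{-1/(p-1)}\,\norm{u_p^-}_\infty^{-1} \;\longrightarrow\; \frac{\Bar t}{\sqrt e}\,e^{-\Bar t/(2(\Bar t+\sqrt e))}.
\]
Multiplying,
\[
r_p^{2/(p-1)} \;=\; y_p^{2/(p-1)}\,(\varepsilon_p^-)^{2/(p-1)} \;\longrightarrow\; \sqrt e\cdot \frac{\Bar t}{\sqrt e}\,e^{-\Bar t/(2(\Bar t+\sqrt e))} \;=\; \frac{\Bar t}{e^{\Bar t/(2(\Bar t+\sqrt e))}},
\]
which is exactly the asserted limit.

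\textbf{Main obstacle.} The delicate point is upgrading the $\mathcal{C}^1_{\text{loc}}$-convergence $z_p^- \to -U$ to a uniform estimate up to and including the growing radius $y_p\to +\infty$, so that the identity $z_p^-(y_p)=p$ can be translated into $-U(y_p) = p\,(1+o(1))$. Note that only logarithmic precision is needed ($\log y_p = p/4 + o(p)$), not sharp constants. The natural tool is the radial integral identity $r\,z_p^{-\prime}(r) = \int_0^r s\,(1-z_p^-(s)/p)^p\,ds$ coming from $\Delta z_p^- = (1-z_p^-/p)^p$, combined with the elementary bound $(1-z/p)^p \le e^{-z}$ for $0\le z\le p$ and a barrier comparison with the explicit $-U$. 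These ingredients should be available as a by-product of the proofs of Theorems~\ref{intro3} and~\ref{intro3bis}, consistently with the authors' remark that the rate of $r_p$ is obtained directly from the rescaling estimates developed for the two main theorems.
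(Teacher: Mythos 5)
There is a genuine gap, and it is structural as much as technical. Your second input, the limit \eqref{normup-} for $\norm{u_p^-}_\infty$, is not available independently of the statement you are proving: in the paper \eqref{normup-} is \emph{deduced from} Theorem~\ref{intro4} together with Proposition~\ref{propball}, which gives $\norm{u_p^-}_\infty=|u_p(0)|=r_p^{-2/(p-1)}(\sqrt e+o(1))$ --- and this relation is exactly your claim $y_p^{2/(p-1)}\to\sqrt e$ after discarding the factor $p^{1/(p-1)}\to1$. So your argument is the paper's final one-line deduction run backwards: Theorem~\ref{intro4} and \eqref{normup-} are equivalent modulo Proposition~\ref{propball}, and all the actual content of Theorem~\ref{intro4}, namely the identification of the constant $\bar t$, is hidden inside the input you assume. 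In the paper that content comes from a chain you bypass entirely: $1=-\tfrac12\log(r_\infty u_\infty)(\alpha-2)$ (Lemma~\ref{lemrate1}) and $\tfrac{4\sqrt e}{r_\infty}=u_\infty(\alpha-2)$ (Lemma~\ref{lem4}) give $r_\infty u_\infty=\bar t$ (Proposition~\ref{proprate1}, equation \eqref{linkur}), and the Pohoza\"ev-type argument of Lemmas~\ref{ipbounded}--\ref{jpbounded} gives $u_\infty=e^{2/(\alpha+2)}$ (Proposition~\ref{propuinf}); only then are Theorem~\ref{intro4} and, afterwards, \eqref{normup-} obtained. Taking \eqref{normup-} as given therefore makes your proof circular within the paper's logic.

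The one nontrivial step you do attempt --- upgrading $z_p^-\to -U$ in $\mathcal C^1_{\mathrm{loc}}$ to $-U(y_p)=p\,(1+o(p))$ at the exponentially growing radius $y_p=r_p/\varepsilon_p^-\sim e^{p/4}$ --- is precisely what you label the ``main obstacle,'' and it is left unproved. The sketch via $(1-z/p)^p\le e^{-z}$ and an unspecified barrier is not sufficient: to conclude $\log y_p=p/4+o(p)$ from $z_p^-(y_p)=p$ one needs the mass $\int_0^{r}\bigl(1-z_p^-(s)/p\bigr)^p s\,\intd s$ to stay close to $4$ uniformly for $r$ up to $y_p$, a global estimate that does not follow from local convergence. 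The paper never needs such an extension of the rescaled convergence: on $B(0,r_p)$ it uses the exact scaling $u_p(r)=r_p^{-2/(p-1)}f_p(r/r_p)$ to the ground state $f_p$ of the unit ball and the known asymptotics $\norm{f_p}_\infty\to\sqrt e$, which is how Proposition~\ref{propball} (hence your step 1) is obtained with no extra work. Repairing your proposal would thus mean replacing the obstacle paragraph by this scaling argument \emph{and} supplying an independent proof of \eqref{normup-} (equivalently of $u_\infty$ and of $r_\infty u_\infty=\bar t$), which is where the real work of Sections 3 and 4 of the paper lies.
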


\begin{RemIntro}
As mentioned before we already know by~\cite{aftalion} that the
solutions $u_p$ are not the least energy nodal solutions
of~\eqref{pblP} in the whole $H^1_0(B)$. The convergence
result~\eqref{ener} shows that $u_p$ are not even ``low energy''
solutions in the sense that they do not satisfy~\eqref{lowenerg}.
Moreover, by comparing~\eqref{lowenerg} and \eqref{ener} we get
the exact difference between the limit energies of this kind of
solutions.
\end{RemIntro}
The proofs of the above theorems are quite long and technically
complicated. They follow from several delicate asymptotic
estimates on $u_p^-$ and $u_p^+$. In particular, the ones which
concern the asymptotic behavior of $u_p^+$, i.e.\ the part of
$u_p$ supported in an annulus are nontrivial and crucial for the
final results.

In the final part of the paper we shortly complete the analysis of
low-energy nodal solutions done in~\cite{GGP}, in the case of the
ball by considering solutions $w_p$ which satisfy \eqref{lowenerg}
and
\begin{equation}
\tag{\protect{$B$}} \label{B} \exists K>0 \text{ such that }
p\left|w_p(x_p^+) +
  w_p(x_p^-)\right|\le K
\end{equation}
with  $x_p^+$ and $x_p^-$ such that  $w_p(x_p^ \pm)= \pm
\norm{w_p^\pm}_\infty$.

We prove :
\begin{ThmIntro}
\label{intro1} If $w_p$ are sign changing solutions of
\eqref{pblP} with Morse index two, satisfying \eqref{lowenerg} and
\eqref{B} with $x^\pm = \lim_{p\to +\infty} x_p^\pm$ then $p w_p$
converge, as $p\to +\infty$ in $\C^1(\IR^2\setminus \{x^+,x^-\})$
to a function which is even  with respect to the diameter passing
through $x^+$ and $x^-$ and odd with respect to the orthogonal
diameter.
\end{ThmIntro}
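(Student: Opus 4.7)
\textbf{Proof proposal for Theorem~\ref{intro1}.} The plan is to combine the asymptotic results already proved in \cite{GGP} with symmetry information coming from the Morse-index-two assumption and the symmetries of the ball.

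First, I would invoke the main asymptotic theorem of \cite{GGP}: under hypotheses \eqref{lowenerg} and \eqref{B}, suitable rescalings of $w_p^\pm$ around $x_p^\pm$ both converge to the regular Liouville profile $U$ (with $\int_{\IR^2}e^U=8\pi$), one has $\norm{w_p^\pm}_\infty\to\sqrt e$, and $pw_p$ itself converges in $C^1_{\mathrm{loc}}(\overline B\setminus\{x^+,x^-\})$ to a difference of Green functions
\begin{equation*}
pw_p(x)\;\longrightarrow\;\gamma_0\bigl(G(x,x^+)-G(x,x^-)\bigr),
\end{equation*}
for an explicit positive constant $\gamma_0$ proportional to $\sqrt e$. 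This reduces Theorem~\ref{intro1} to the identification of the relative position of $x^+$ and $x^-$.

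Next, I would apply a foliated-Schwarz symmetry theorem in the spirit of Pacella--Weth for low-Morse-index solutions of superlinear Dirichlet problems in a ball: the Morse-index-two assumption forces the existence, for each $p$, of a diameter $L_p$ of $B$ such that $w_p$ is symmetric under the reflection $\sigma_{L_p}$ across $L_p$ and strictly monotone in the angular variable measured from $L_p$. Consequently on every circle $\{\abs{x}=r\}$ the maximum and the minimum of $w_p$ are attained at antipodal points of $L_p$; in particular $x_p^+,x_p^-\in L_p$ lie on opposite rays from the origin, and after passing to the limit there is a diameter $L$ containing both $x^+$ and $x^-$.

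To upgrade this to the antipodal relation $x^-=-x^+$, I would combine the odd symmetry of the nonlinearity and the central symmetry of $B$: the function $\widetilde w_p(x)\eqdef -w_p(-x)$ is another Morse-index-two solution with the same energy, whose extrema sit at $-x_p^\mp$; the corresponding limit profile is $\gamma_0\bigl(G(\cdot,-x^-)-G(\cdot,-x^+)\bigr)$. Matching these two limits using the refined balance encoded in \eqref{B} (equivalently, writing the Kirchhoff--Routh functional $\mathcal F(a,b)=H(a,a)+H(b,b)-2G(a,b)$ governing the admissible concentration pairs in the disc, whose two-bubble critical points in $B$ are precisely the antipodal pairs on diameters) forces $x^-=-x^+$. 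Once this is established, the ball-invariance of the Green function, $G(\sigma x,\sigma y)=G(x,y)$ for every isometry $\sigma$ of $B$, yields immediately that $\gamma_0(G(\cdot,x^+)-G(\cdot,x^-))$ is invariant under $\sigma_L$ (which fixes $x^\pm$) and anti-invariant under $\sigma_{L^\perp}$ (which swaps $x^+$ and $x^-$), proving the theorem.

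The main technical obstacle is the antipodal step: upgrading the collinearity $x^\pm\in L$ provided by Morse-index symmetry to the symmetric relation $x^-=-x^+$. The Kirchhoff--Routh argument above is the cleanest route, but an alternative would be to apply a Pohozaev-type identity to $pw_p$ on $B$ and exploit the balance \eqref{B} directly to read off $\abs{x^+}=\abs{x^-}$; either way this is where the specific geometry of the disc, and not merely the foliated-Schwarz symmetry, enters.
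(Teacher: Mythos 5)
Your overall skeleton is the same as the paper's: quote the result of \cite{GGP} that under \eqref{lowenerg} and \eqref{B} the functions $pw_p$ converge to a multiple of $G(\cdot,x^+)-G(\cdot,x^-)$, use the Morse-index-two hypothesis and the Pacella--Weth foliated Schwarz symmetry (plus the non-radiality result of \cite{aftalion}, which you gloss over: foliated Schwarz symmetry alone does not give strict angular monotonicity, so radial solutions must be excluded separately) to place $x^+$ and $x^-$ on one diameter on opposite sides of the centre, and then pin down their position to conclude the even/odd symmetry of the limit. The genuine gap is precisely at the step you yourself flag as the obstacle, the antipodal relation $x^-=-x^+$, and neither of your two suggested mechanisms closes it. The argument via $\widetilde w_p(x):=-w_p(-x)$ does not work as stated: this is indeed another Morse-index-two solution with extrema at $-x_p^\mp$, but it is a \emph{different} solution, and nothing forces its limit $\gamma_0\bigl(G(\cdot,-x^-)-G(\cdot,-x^+)\bigr)$ to coincide with the limit of $pw_p$; asserting that they ``match'' presupposes the asymptotic antisymmetry you are trying to prove, so the argument is circular.

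Your second route (the Kirchhoff--Routh functional) is the paper's actual route, but you assume exactly what has to be shown. The analysis of \cite{GGP} provides the balance conditions \eqref{sym...!} on the pair $(x^+,x^-)$ (note these are an output of the blow-up analysis, not something ``encoded in \eqref{B}'', which is merely the bound $p|w_p(x_p^+)+w_p(x_p^-)|\le K$), and these are the critical-point equations of your functional $\mathcal F$. The content of the step is the classification of its critical configurations in the disc, which you state as a known parenthetical fact (``whose two-bubble critical points in $B$ are precisely the antipodal pairs on diameters'') without proof or citation. The paper proves it by direct computation: writing the Green function of the unit disc explicitly, substituting $x^+=(0,a)$, $x^-=(0,-b)$ with $a,b>0$ into \eqref{sym...!}, and showing that the resulting system
\begin{equation*}
\frac{-1}{a+b}+\frac{b}{ab+1}-\frac{a}{a^2-1}=0,\qquad
\frac{1}{a+b}-\frac{a}{ab+1}-\frac{b}{1-b^2}=0
\end{equation*}
has the unique solution $a=b=\sqrt{\sqrt 5-2}$. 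Without carrying out this computation (or citing an equivalent classification of equilibria of an opposite-signed vortex pair in the disc), your proof is incomplete; the Pohozaev-identity alternative you mention is likewise only a suggestion, not an argument.
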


The previous result is a further step in the direction of proving
that low energy solutions are indeed antisymmetric functions as it
is conjectured to be the case for least energy nodal solution in
the ball.

The paper is organized as follows. In Section $2$ we prove some
preliminary estimates on $u_p$ and $u_p^-$ and we show that the
nodal line shrinks to the origin. In  Section $3$ we prove the
important estimates on $u_p^+$. In Section $4$ we prove some
crucial lemmas about the limit values of $\norm{u_p^+}_{\infty}$
and $r_p$, as $p\to + \infty$. From these we deduce the proofs of
Theorem~\ref{intro3}, Theorem~\ref{intro3bis} and
Theorem~\ref{intro4}. Finally, in Section $5$ we prove
Theorem~\ref{intro1}.

\section{Preliminary estimates on $u_p$ and $u_p^-$}
\label{S1}

\subsection{Control of the total energy}
We first recall that solutions of problem~\eqref{pblP} are the
critical points of the energy functional $\E_{p}$ defined on
$H^{1}_{0}(B)$ by
\[\E_{p}(u)=\frac{1}{2}\int_{B}\vert \nabla u\vert^{2}
-\frac{1}{p+1}\int_{B}\vert u\vert^{p+1}.\]

If $u$ is a nodal solution then
\begin{equation}\label{5}
\int_{B}\vert\nabla u^{+}\vert^{2}=\int_{B}\vert u^{+}\vert^{p+1}
\text{ and } \int_{B}\vert\nabla u^{-}\vert^{2} = \int_{B}\vert
u^{-}\vert^{p+1}.
\end{equation}
So, $\E_p (u) = \left(\frac{1}{2} - \frac{1}{p+1}\right)
\int_{B}\abs{\nabla u}^2$.  Moreover, if $u$ is a radial nodal
solution with least energy then $\E_p(u)\leq \E_p (v)$ for any
radial function $v$ belonging to the nodal Nehari set $N_p:=\{u :
u^\pm\neq 0 \text{ and } \int_{B} \abs{\nabla u^\pm}^2 = \int_{B}
\abs{u^\pm}^{p+1}  \}$.

This first proposition gives a control on the energy and, thus, on
the $H^1_0$ and $L^{p+1}$-norms. Let us remark that this result
will be improved in Theorem~\ref{intro3bis}.
\begin{Prop}
\label{prop1}
  $p\int_{B} \abs{\nabla u_p}^2\le C$ for a positive constant $C$ independent of $p$.
%Moreover, $K$ must be larger that $16\pi e$ implying that $u_p$ is not
%of low energy.
\end{Prop}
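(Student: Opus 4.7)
The plan is to exploit the variational characterization of $u_p$ as a least-energy element of the radial nodal Nehari set. By~\eqref{5},
\[
\frac{p-1}{2(p+1)}\int_B\abs{\nabla u_p}^2 = \E_p(u_p) \le \E_p(v)
\]
for every radial $v\in N_p$, so it suffices to construct a competitor $v\in N_p$ with $\E_p(v) \le C/p$ for a constant $C$ independent of $p$.

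I would build $v$ by placing two radial bumps on disjoint subdomains, for instance a radial $\phi_{1,p}\ge 0$ supported in the inner ball $B_1 := B(0,1/2)$ and a radial $\phi_{2,p}\ge 0$ supported in the annulus $B_2 := \{1/2<\abs{x}<1\}$. Rescaling each piece so that it lies on its own Nehari manifold gives $t_{i,p}\phi_{i,p}$ with
\[
t_{i,p}^{p-1} = \frac{\int_B\abs{\nabla\phi_{i,p}}^2}{\int_B\abs{\phi_{i,p}}^{p+1}},
\]
and then $v := t_{1,p}\phi_{1,p} - t_{2,p}\phi_{2,p}$ automatically belongs to $N_p$ (disjoint supports ensure the Nehari condition for $v^+$ and $v^-$ separately), with
\[
\E_p(v) = \frac{p-1}{2(p+1)}\sum_{i=1}^2 t_{i,p}^2 \int_B\abs{\nabla\phi_{i,p}}^2.
\]

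The crucial and delicate point is the choice of the profile $\phi_{i,p}$. A fixed bump yields $t_{i,p}^2$ only bounded, producing $\int\abs{\nabla u_p}^2 = O(1)$, which is too weak. The right choice is a concentrating family of \emph{Moser type} centered at an interior point $y_i \in B_i$, with $\|\phi_{i,p}\|_\infty \sim \sqrt{\log p}$ and support shrinking at a rate $\sim e^{-p/2}$. A direct Laplace-type computation then shows that $\int_B\abs{\phi_{i,p}}^{p+1}$ grows super-polynomially in $p$, while $\int_B\abs{\nabla\phi_{i,p}}^2$ stays uniformly bounded; this forces $t_{i,p}^2 = O(1/p)$, hence $\E_p(v) \le C/p$, and the proposition follows from the variational inequality.

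The main obstacle is precisely this calibration of the concentration rate: one has to push $\|\phi_{i,p}\|_\infty \to \infty$ fast enough to make $\int\abs{\phi_{i,p}}^{p+1}$ large, while keeping the gradient energy under control. This is the classical Ren--Wei-type estimate for the least energy of positive Lane--Emden solutions on planar domains, and an alternative is simply to invoke it on $B_1$ and on $B_2$ separately, take the two resulting positive solutions as $v^-$ and $v^+$, and add their energies.
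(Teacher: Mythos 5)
Your variational set-up coincides with the paper's (compare $u_p$ with a competitor $v$ in the \emph{radial} nodal Nehari set, so that it suffices to exhibit $v$ with $p\,\E_p(v)\le C$), but the geometry of your competitor contains a genuine gap: the two nodal domains cannot be kept fixed as $p\to+\infty$. The inner bump is harmless only if it is centered at the origin (otherwise it is not radial), but the real obstruction is the outer piece. On the fixed annulus $B_2=\{1/2<\abs{x}<1\}$ a radial function cannot play the Moser game at all: for radial $u\in H^1_0(B_2)$ one has $\abs{u(r)}\le\int_{1/2}^1\abs{u'(s)}\intd s\le\bigl(\tfrac{\log 2}{2\pi}\bigr)^{1/2}\norm{\nabla u}_{L^2}$, hence $\int_{B_2}\abs{u}^{p+1}\le C^{p+1}\abs{B_2}\,\norm{\nabla u}_{L^2}^{p+1}$ with $C$ independent of $p$, and for the Nehari-normalized multiple $t u$ the gradient norm cancels exactly:
\begin{equation*}
\E_p(tu)=\tfrac{p-1}{2(p+1)}\Bigl(\tfrac{\norm{\nabla u}_{L^2}^{2}}{\int\abs{u}^{p+1}}\Bigr)^{2/(p-1)}\norm{\nabla u}_{L^2}^{2}\ \ge\ \tfrac{p-1}{2(p+1)}\bigl(C^{p+1}\abs{B_2}\bigr)^{-2/(p-1)}\ \longrightarrow\ \tfrac{1}{2C^{2}}>0,
\end{equation*}
uniformly over all radial $u$ supported in $B_2$. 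So whatever profile you pick (tent, Moser ring, or the exact radial solution of the annular problem), the outer part of your competitor carries energy bounded away from zero, $p\,\E_p(v)\to+\infty$, and no bound on $p\int_B\abs{\nabla u_p}^2$ follows. The alternative you suggest, invoking the Ren--Wei estimate on $B_2$, does not help either: the least-energy positive solution of a fixed annulus concentrates at a point and is not radial, hence it is not an admissible competitor, since $u_p$ minimizes only within the radial class (the genuine least-energy nodal solutions have lower energy and are non-radial).

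The repair is precisely what the paper does: the interface must shrink exponentially, $R_p=e^{-\alpha p}$. On $\{R_p<\abs{x}<1\}$ the radial embedding constant degenerates like $\sqrt{\log(1/R_p)}\sim\sqrt{\alpha p}$, which is exactly what allows a radial outer piece of energy of order $1/p$. The paper glues the exact positive radial solutions of $-\Delta u=u^p$ on $B(0,R_p)$ and on the annulus (so the Nehari constraints are automatic) and quotes the sharp asymptotics: $p\int\abs{\nabla v_p}^2\le 8\pi e\,e^{2\alpha-1}/\alpha$ for the annulus with exponentially small hole (\cite{demarchis}, after \cite{grossi2,EK}) and $p\int\abs{\nabla f_p}^2\to 8\pi e$ for the unit ball, the inner piece paying only the bounded dilation factor $R_p^{-4/(p-1)}\to e^{4\alpha}$. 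Your Moser-type test functions could replace the exact solution on the inner ball, but the outer piece forces the nodal radius of any effective radial competitor to tend to $0$ exponentially fast in $p$, consistently with Theorem~\ref{intro4}.
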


\begin{proof}
On one hand, let us  consider the unique positive radial solution
$v_p$
 of $-\Delta u = u^p$ in $A_{R_p}:=B\setminus B[0,R_p]$ with $u=0$ on $\partial A_{R_p}$, for
 $R_p\in(0,1)$.\\
 Using some results of~\cite{grossi2} (see also \cite{EK}), by some delicate and nontrivial estimates in~\cite{demarchis} it is proved that
 for $R_p = e^{-\alpha p}$ and $\alpha>0$,
\begin{equation}
\label{est1} p\int_{A} \abs{\nabla v_p}^2 \le 8\pi e
\frac{e^{2\alpha -1}}{\alpha},
\end{equation}
for $p$ sufficiently large.

On the other hand,  let us consider the unique positive radial
solution $\eta_p$ of $-\Delta u = u^p$ in $B(0,R_p)$ and define
$\eta_p(r) = \eta_p(\abs{x})$.\\   We get that $\eta_p(r) =
R_p^{-2/(p-1)} f_p(r/R_p)$ where $f_p$ is the unique positive
solution of $-\Delta u = u^p$ on the unit ball $B$.
In~\cite{grossi},\cite{EK}, it is proved that
 $p\int_{B} \abs{\nabla f_p}^2  = p \int_{B} f_p^{p+1}\to
 8\pi e$ as $p\to +\infty$. So, $p\int_0^1
f_p'(r)^2 r\intd r = p \int_0^1 f_p^{p+1}(r) r \intd r \to 4 e$
when $p\to +\infty$. Thus,
\begin{equation}
\label{est2} p \int_0^{R_p} \eta_p'(r)^2 r\intd r = p
\int_0^{R_p}\eta_p^{p+1}(r) r \intd r = \frac{p}{R_p^{4/(p-1)}}
\int_0^1 f_p'(r)^2 r \intd r = \frac{4
  e + o(1)}{R_p^{4/(p-1)}}.
\end{equation}
For $R_p = e^{-\alpha p}$, we get $4 e^{1+4\alpha}$ and when
$\alpha = \frac{1}{5}$, we get $4 e^{\frac{9}{5}}$.

Then, as $u_p$ is a least energy nodal radial solution and as the
function defined by $v_p$ in $B(0,R_p)$ and $\eta_p$ on $A_{R_p}$
is radial and belongs to the nodal Nehari set,  we get that
\begin{equation*}
p \int_{B} \abs{\nabla u_p}^2  \leq 8\pi  e^{\frac{9}{5}} + 40\pi
e e^{\frac{2}{5}}\approx 339.
\end{equation*}
\end{proof}

\subsection{The nodal line shrinks to $(0,0)$}

In the sequel we will use the well known "radial lemma" due to
Strauss (see~\cite{strauss}).
 Let us denote by
$H_{\text{rad}}(B)$ the subspace of  $H^1_0(B)$ given by  radial
functions.

\begin{Lem}
\label{radiallemma} There exists $c_N>0$ such that
\begin{equation*}
\abs{u(r)} \leq c_N \frac{\norm{u}_{H^1}}{r^{(N-1)/2}}\quad\forall
u\in H_{\text{rad}}(B)\ \hbox{and }r\in(0,1).
\end{equation*}
\end{Lem}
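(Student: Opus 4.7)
The plan is to reduce the claim to a one-dimensional integral identity for the radial profile of $u$, exploiting the boundary condition $u(1)=0$. By density of smooth radial functions in $H_{\text{rad}}(B)$, it suffices to establish the bound for $u\in \C^\infty(\overline B)$ radial with $u(1)=0$, and then conclude for the general case by approximation.

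The starting point I would use is the identity
\begin{equation*}
r^{N-1}u(r)^2=-\int_r^1\frac{d}{ds}\bigl(s^{N-1}u(s)^2\bigr)\intd s,
\end{equation*}
which follows from the fundamental theorem of calculus together with $u(1)=0$. Expanding the derivative inside the integral produces two pieces: a fixed-sign term $(N-1)s^{N-2}u(s)^2\ge 0$ and a mixed term $2s^{N-1}u(s)u'(s)$. The key observation is that the first can simply be discarded, leaving the cleaner one-sided bound
\begin{equation*}
r^{N-1}u(r)^2\le 2\int_r^1 s^{N-1}\abs{u(s)}\abs{u'(s)}\intd s.
\end{equation*}

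From here a single application of $2ab\le a^2+b^2$ (after splitting $s^{N-1}=s^{(N-1)/2}\cdot s^{(N-1)/2}$) bounds the right-hand side by $\int_r^1 s^{N-1}\bigl(u(s)^2+u'(s)^2\bigr)\intd s$, which is at most $\omega_{N-1}^{-1}\norm{u}_{H^1}^2$ once one recognizes the radial decomposition of the $L^2(B)$ norms, with $\omega_{N-1}$ the measure of the unit sphere in $\IR^N$. Dividing by $r^{N-1}$ and taking square roots then yields the estimate with the explicit constant $c_N=\omega_{N-1}^{-1/2}$.

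The whole argument is an elementary one-dimensional computation and I do not expect any genuine obstacle; the only point requiring a word of care is the density step used to pass from smooth radial test functions to arbitrary elements of $H_{\text{rad}}(B)$, which is standard.
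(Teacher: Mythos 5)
Your proposal is correct. Note that the paper itself does not prove this lemma at all: it is quoted as the well known ``radial lemma'' and attributed to Strauss, with a citation replacing the proof. Your argument is the standard elementary one, adapted to the ball with Dirichlet condition: writing $r^{N-1}u(r)^2=-\int_r^1\frac{d}{ds}\bigl(s^{N-1}u(s)^2\bigr)\intd s$ (using $u(1)=0$ instead of the decay at infinity used in Strauss's whole-space version), discarding the nonnegative term $(N-1)s^{N-2}u^2$, applying $2\abs{u}\abs{u'}\le u^2+u'^2$ under the common weight $s^{N-1}$, and recognizing $\omega_{N-1}\int_0^1 s^{N-1}(u^2+u'^2)\intd s=\norm{u}_{H^1}^2$; this gives the bound with $c_N=\omega_{N-1}^{-1/2}$. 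The computation checks out, and the two points you flag are indeed the only ones needing care: the density of smooth radial functions vanishing at $\abs{x}=1$ in $H_{\text{rad}}(B)\subset H^1_0(B)$ (obtained, e.g., by radially averaging compactly supported approximations, since radial averaging is an $H^1_0$-contraction fixing $u$), and the remark that the pointwise inequality for general $u$ is understood for the continuous representative of the radial profile on $(0,1)$, which exists by the one-dimensional Sobolev embedding away from the origin and is inherited in the limit from the smooth approximants. With those two sentences made explicit, your proof is complete and self-contained, which is if anything more informative than the paper's bare citation.
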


The following lemma shows that $\norm{u_p^\pm}_\infty$ do not go
to $0$.
\begin{Lem}
\label{linf} For any $p>1$ we have that
$\norm{u_p^\pm}_\infty\ge\lambda_1^{\frac{1}{p-1}}$ where
$\lambda_1$ is the first eigenvalue of $-\Delta$ on $B$.
\end{Lem}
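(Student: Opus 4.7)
The plan is to bootstrap the eigenvalue formulation of the equation on each nodal domain. Let $\Omega_p^+ = \{x \in B : u_p(x) > 0\}$ and $\Omega_p^- = \{x \in B : u_p(x) < 0\}$. On $\Omega_p^+$ the function $u_p$ is strictly positive, vanishes on $\partial \Omega_p^+$, and satisfies
\begin{equation*}
-\Delta u_p = u_p^{p-1}\, u_p \le \norm{u_p^+}_\infty^{p-1}\, u_p \qquad \text{in } \Omega_p^+.
\end{equation*}
The same reasoning applies verbatim to $-u_p$ on $\Omega_p^-$, giving $-\Delta(-u_p) \le \norm{u_p^-}_\infty^{p-1}(-u_p)$ there.

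Next I would test with $u_p$ itself (respectively $-u_p$) and integrate by parts, which is legitimate since each nodal component is a bounded regular subdomain of $B$ and $u_p$ vanishes on its boundary. This yields
\begin{equation*}
\int_{\Omega_p^+} \abs{\nabla u_p}^2 \le \norm{u_p^+}_\infty^{p-1} \int_{\Omega_p^+} u_p^2,
\end{equation*}
and the analogous inequality on $\Omega_p^-$. Dividing by $\int_{\Omega_p^\pm} u_p^2$ and invoking the variational characterization of the first Dirichlet eigenvalue on $\Omega_p^\pm$ gives $\lambda_1(\Omega_p^\pm) \le \norm{u_p^\pm}_\infty^{p-1}$.

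Finally, by the strict domain monotonicity of $\lambda_1$, since $\Omega_p^\pm \subsetneq B$ we have $\lambda_1(\Omega_p^\pm) \ge \lambda_1(B) = \lambda_1$. Combining the two inequalities yields $\norm{u_p^\pm}_\infty^{p-1} \ge \lambda_1$, hence $\norm{u_p^\pm}_\infty \ge \lambda_1^{1/(p-1)}$, which is the desired estimate. There is no real obstacle here: the argument is a standard comparison between the $L^\infty$-norm of a positive solution of a superlinear equation and the first eigenvalue of its nodal domain, and the main thing to be careful about is merely that $u_p$ is an admissible test function in the Rayleigh quotient on each nodal set (which follows from $u_p \in H_0^1(\Omega_p^\pm)$ by extending by zero).
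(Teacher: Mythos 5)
Your proof is correct and follows essentially the same route as the paper: testing the equation with $u_p^\pm$ on each nodal domain (equivalently, using the Nehari-type identity $\int_B|\nabla u_p^\pm|^2=\int_B|u_p^\pm|^{p+1}$), bounding $|u_p^\pm|^{p+1}\le\norm{u_p^\pm}_\infty^{p-1}(u_p^\pm)^2$, invoking the variational characterization of $\lambda_1$ on the nodal domain, and concluding by domain monotonicity $\lambda_1(\tilde\Omega_p^\pm)\ge\lambda_1(B)$. No gap to report.
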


\begin{proof}
Using Poincar\'e's inequality, we get
\begin{equation*}
\begin{split}
1 = \frac{\int_{B}\abs{u_p^{\pm}}^{p+1}}{\int_{B}\abs{\nabla
  u_{p}^{\pm}}^2} & \leq
\frac{\norm{u_p^\pm}_\infty^{p-1}\int_{B}(u_p^\pm)^2}{\int_{\Omega}\abs{\nabla
u_{p}^{\pm}}^2} \\
&\leq \norm{u_p^\pm}_\infty^{p-1}
\lambda_{1}^{-1}(\Tilde{\Omega}_{p}^{\pm}),
\end{split}
\end{equation*}
where $\Tilde{\Omega}_{p}^{\pm}$ are the nodal domains of $u_p$.
As $\Tilde{\Omega}_{p}^{\pm}\subset B$, we have
$\lambda_1(\Tilde{\Omega}_{p}^{\pm})\geq \lambda_1$ which ends the
proof.
\end{proof}

\begin{Rem}
\label{rem2} Using Proposition 2.7 of \cite{GGP}, we also get that
$\norm{u_p^\pm}_\infty$ are bounded from above.  So,
Lemma~\ref{linf} implies that there exists $0< a <b$ such that
$a<\norm{u_p^\pm}_\infty <b$ for any $p>1$.
\end{Rem}
Let us denote, as in Section \ref{Section-Intro}, by $r_p$ the
"nodal radius", i.e. $u_p(x)=0$ for $|x|=r_p$. We will prove that
$r_p\rightarrow0$ as $p\rightarrow+\infty$.
\begin{Prop}
$r_p\to 0$ as $p\to +\infty$.
\end{Prop}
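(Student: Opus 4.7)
The plan is to argue by contradiction, exploiting the fact that $u_p^+$ is a radial $H^1_0$ function supported in the annulus $\{r_p\le|x|\le 1\}$, combined with the Strauss radial lemma and the energy bound from Proposition~\ref{prop1}.

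First, I would suppose for contradiction that $r_p\not\to 0$. Then along some subsequence (still denoted $p$) there exists $r_0>0$ with $r_p\ge r_0$. Since $u_p$ is a radial nodal solution with $u_p(0)<0$ and a single nodal circle $\{|x|=r_p\}$, the positive part $u_p^+$ is supported in the annulus $\{r_p\le|x|\le 1\}$, and in particular $u_p^+(x)=0$ for $|x|<r_p$. Hence the $L^\infty$-norm of $u_p^+$ is attained at some point $s_p\in[r_p,1]\subset[r_0,1]$.

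Second, I would apply Lemma~\ref{radiallemma} (with $N=2$) to $u_p^+\in H_{\text{rad}}(B)$, giving
\[
\norm{u_p^+}_\infty = u_p^+(s_p) \le \frac{c_2\,\norm{u_p^+}_{H^1}}{\sqrt{s_p}} \le \frac{c_2\,\norm{u_p^+}_{H^1}}{\sqrt{r_0}}.
\]
Third, I would control $\norm{u_p^+}_{H^1}$ using the Nehari identity~\eqref{5} together with Proposition~\ref{prop1}: since $\int_B|\nabla u_p^+|^2\le\int_B|\nabla u_p|^2\le C/p$, one gets $\norm{u_p^+}_{H^1}^2\le C/p\to 0$. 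Plugging this into the previous display yields $\norm{u_p^+}_\infty\to 0$, which contradicts Remark~\ref{rem2}. Therefore $r_p\to 0$.

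The argument is short and the only ingredient that is not entirely routine is the observation that the support of $u_p^+$ stays away from the origin under the contradiction hypothesis (which justifies bounding $s_p$ from below by $r_0$ when invoking the radial lemma); once this is in place, the decay rate forced by Strauss combined with the vanishing of the $H^1$-norm of $u_p^+$ at rate $p^{-1/2}$ immediately defeats the lower bound $\norm{u_p^+}_\infty\ge a>0$ from Lemma~\ref{linf}. No delicate estimate on the profile of $u_p$ is needed at this stage; the finer asymptotics of $r_p$ will come only later, in Theorem~\ref{intro4}.
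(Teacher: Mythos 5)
Your argument is correct and is essentially the paper's own proof: assume $r_p\ge r_*>0$, note $s_p\ge r_p$, apply the Strauss radial lemma at the maximum point of $u_p^+$, and use Proposition~\ref{prop1} (which gives $\norm{u_p}_{H^1}=O(p^{-1/2})$) to force $\norm{u_p^+}_\infty\to 0$, contradicting Lemma~\ref{linf}/Remark~\ref{rem2}. The only cosmetic difference is that you apply the radial lemma to $u_p^+$ rather than to $u_p$ itself, which changes nothing in substance.
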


\begin{proof}
Assume by contradiction that $r_p \geq r_* > 0$. Then, by the
radial lemma~\ref{radiallemma}, we get
\begin{equation*}
u_p(s_p) \leq C \frac{\norm{u_p}_{H^1}}{s_p^{1/2}},
\end{equation*}
where $u_p(s_p)=||u_p^+||_\infty$.\\
As $\liminf_{p\to \infty} u_p(s_p)\geq 1$ (see Lemma~\ref{linf}),
multiplying by $\sqrt p$, for $p$ large we get
\begin{equation*}
 C \left( p\int_{0}^1 u_p'(r)^2 r \intd
  r\right)^{1/2}\geq\frac{1}{2} \sqrt p r_*^{1/2} \to +\infty
\end{equation*}
which is a contradiction with Proposition~\ref{prop1} .
\end{proof}

Moreover, using the same kind of argument as in the proof of
Proposition~\ref{prop1}, we  obtain the following proposition.
\begin{Prop}
\label{propball} We have, as $p\to +\infty$,
\begin{itemize}
\item  $\max_{x\in B(0,r_p)} \abs{u_p(x)} = r_p^{-2/(p-1)} (\sqrt e +
  o(1))$ ;
\item  $p \int_{0}^{r_p}\abs{u_p(r)}^{p+1}  r \intd r = \frac{4 e + o(1)}{r_p^{4/(p-1)}} $;
\item $pu_p'(r_p) = p r_p^{-1}\int_0^{r_p} \abs{u_p(r)}^pr\intd r =
  \frac{4\sqrt e + o(1)}{r_p^{2/(p-1)+1}}$ ;
\item $p \int_{B(0, r_p)} \abs{u_p(x)}^p \intd x = \frac{C + o(1)}{r_p^{2/p-1}}$.
\end{itemize}
\end{Prop}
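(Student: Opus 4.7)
The plan is to reduce all four estimates to a single exact scaling identity combined with known asymptotics for the positive Lane--Emden ground state $f_p$ on the unit ball (the same object used in the proof of Proposition~\ref{prop1}). Precisely, on $B(0,r_p)$ the function $u_p^-=-u_p$ is a positive radial solution of $-\Delta u=u^p$ with zero Dirichlet data on $\partial B(0,r_p)$, and such a solution is unique (Gidas--Ni--Nirenberg plus Kwong's ODE uniqueness). Using the scaling invariance $u(x)\mapsto \lambda^{2/(p-1)}u(\lambda x)$ of $-\Delta u=u^p$, this yields the exact identity
\[
u_p^-(r)=r_p^{-2/(p-1)}\,f_p(r/r_p),\qquad r\in[0,r_p].
\]

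The three asymptotic ingredients I shall use are: (a) $\|f_p\|_\infty=f_p(0)\to\sqrt{e}$; (b) $p\int_0^1 f_p(s)^{p+1} s\intd s\to 4e$, equivalent to $p\int_B f_p^{p+1}\intd x\to 8\pi e$; and (c) $p\int_0^1 f_p(s)^{p} s\intd s\to 4\sqrt{e}$, i.e.\ $p\int_B f_p^p\intd x\to 8\pi\sqrt{e}$. Items (a) and (b) are recalled from~\cite{grossi},\cite{EK} inside the proof of Proposition~\ref{prop1}. Item (c) is the Liouville blow-up limit for $f_p$: rescaling by $\varepsilon_p=(p\,f_p(0)^{p-1})^{-1/2}$, the profile $v_p(y)=p(f_p(\varepsilon_p y)/f_p(0)-1)$ converges to the regular Liouville solution $U$ on compacta, so $p\,f_p(\varepsilon_p y)^p/f_p(0)=(1+v_p(y)/p)^p\to e^{U(y)}$, and combining with the standard tail estimate on $\{|x|\ge \delta\}$ (where $f_p\to 0$ exponentially in $p$) yields $p\int_B f_p^p\intd x\to f_p(0)\int_{\IR^2}e^U=8\pi\sqrt{e}$.

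Granted the identity and (a)--(c), claims (i), (ii), (iv) are one-line substitutions with the change of variables $s=r/r_p$. For (i), $\max_{B(0,r_p)}|u_p|=u_p^-(0)=r_p^{-2/(p-1)}f_p(0)\sim r_p^{-2/(p-1)}\sqrt{e}$. For (ii), $\int_0^{r_p}|u_p|^{p+1}r\intd r=r_p^{2-2(p+1)/(p-1)}\int_0^1 f_p(s)^{p+1}s\intd s=r_p^{-4/(p-1)}\int_0^1 f_p^{p+1}s\intd s$; multiplying by $p$ and invoking (b) gives $(4e+o(1))r_p^{-4/(p-1)}$. For (iv), $p\int_{B(0,r_p)}|u_p|^p\intd x=2\pi p\int_0^{r_p}(u_p^-)^p r\intd r=2\pi p\,r_p^{-2/(p-1)}\int_0^1 f_p(s)^p s\intd s$, so by (c) the implicit constant is $C=8\pi\sqrt{e}$ and the exponent is $-2/(p-1)$.

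For claim (iii), integrating the radial ODE $-(r u_p'(r))'=r|u_p|^{p-1}u_p(r)$ on $(0,r_p)$ and using $u_p'(0)=0$ gives $r_p u_p'(r_p)=\int_0^{r_p}r(u_p^-)^p\intd r$, which is the first equality in the statement. Performing the same change of variables as above, one obtains $p u_p'(r_p)=p\,r_p^{-2/(p-1)-1}\int_0^1 s f_p(s)^p\intd s$, and (c) closes the estimate with the stated value $4\sqrt{e}$. The only non-routine step is ingredient (c); once this standard Liouville/Gel'fand blow-up limit is in place, the four conclusions are immediate, and I expect the tail control in (c) to be the main (though standard) technical point.
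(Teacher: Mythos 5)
Your proposal is correct and takes essentially the same route as the paper: the paper's (very brief) proof likewise rescales $u_p$ on $B(0,r_p)$ to the unit-ball ground state $f_p(r)=r_p^{2/(p-1)}u_p(r_p r)$ and invokes the known asymptotics $\norm{f_p}_\infty\to\sqrt e$, $p\int_0^1 f_p'(r)^2 r\intd r\to 4e$ (hence $p\int_0^1 f_p^{p+1}r\intd r\to 4e$) and, implicitly via the cited references \cite{grossi,GGP}, $p\int_0^1 f_p^{p}r\intd r\to 4\sqrt e$. You merely fill in what the paper delegates to the citations (the blow-up derivation of the $4\sqrt e$ limit and the ODE integration giving $r_pu_p'(r_p)=\int_0^{r_p}\abs{u_p}^p r\intd r$), so the substance is identical.
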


\begin{proof}
We  again  consider the function $f_p = r_p^{2/(p-1)} u_p(r_pr)$
which is the ground state of our problem on $B$ and we use
estimates on $f$ given in \cite{grossi,GGP}. In particular, we use
that $p \int_0^1 \abs{\nabla
  f_p(r)}^2r\intd r\to 4e $ and $\norm{f_p}_{\infty}\to
\sqrt e$.
\end{proof}

As $r_p\leq 1$, let us remark that the second  point of
Proposition~\ref{propball} implies that  there exists $0<a$ such
that $a<r_p^{\frac{2}{p-1}}\leq 1$.   From now on, let us define
\begin{equation}
\label{rinfdef} 0< r_\infty := \lim_{p\to \infty}
r_p^{\frac{2}{p-1}}.
\end{equation}
\subsection{Rescaling in the ball $B(0,r_p)$}

\begin{Prop}
\label{rescball} For $\left(\varepsilon_p^-\right)^{-2}:=  p
\norm{u_p^-}_\infty^{p-1}= p \abs{u_p(0)}^{p-1}$,
the rescaled functions $z_p^- : B(0, r_p/\varepsilon_p^-)\to \IR$
defined by
\begin{equation*}
z_p^-(x) = -\frac{p}{\abs{u_p(0)}}(u_p^-(\varepsilon_p^-
\abs{x})+u_p(0))
\end{equation*}
converges to $-U$ in $\mathcal{C}^1_\text{loc}(\IR^2)$, $U$ defined as in \eqref{U}.\\
\end{Prop}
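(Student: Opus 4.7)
The plan is to view Proposition~\ref{rescball} as a standard Liouville-type blow-up around the interior minimum of $u_p$ and to identify the limit via the Chen-Li classification of finite-mass entire solutions of the Liouville equation. I would proceed in three steps.

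First, I would derive the equation satisfied by $z_p^-$. Inside $B(0,r_p)$ the negative part $u_p^-=-u_p$ is a positive radial solution of $-\Delta u_p^-=(u_p^-)^p$ vanishing on $\partial B(0,r_p)$, so by Gidas--Ni--Nirenberg it is radially decreasing with $u_p^-(0)=\norm{u_p^-}_\infty=\abs{u_p(0)}$. Writing $v_p(x):=u_p^-(\varepsilon_p^- x)/\abs{u_p(0)}=1-z_p^-(x)/p$, the choice $(\varepsilon_p^-)^{-2}=p\norm{u_p^-}_\infty^{p-1}$ turns $-\Delta u_p^-=(u_p^-)^p$ into $-\Delta v_p=\frac{1}{p}v_p^p$ and hence
\begin{equation*}
-\Delta z_p^-(x) = -\left(1-\frac{z_p^-(x)}{p}\right)^{p}, \qquad 0\le z_p^-\le p,
\end{equation*}
on $B(0,r_p/\varepsilon_p^-)$, with $z_p^-(0)=0$ and $\nabla z_p^-(0)=0$.

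Second, I would show that the rescaled domain exhausts $\IR^2$ and establish a uniform local bound on $z_p^-$. Combining $\norm{u_p^-}_\infty=r_p^{-2/(p-1)}(\sqrt e+o(1))$ from Proposition~\ref{propball} with the definition of $\varepsilon_p^-$ gives $(r_p/\varepsilon_p^-)^2=p\,r_p^2\norm{u_p^-}_\infty^{p-1}=p(\sqrt e+o(1))^{p-1}\to+\infty$. Since $(1-z_p^-/p)^p\in[0,1]$, integrating the radial ODE $(r(z_p^-)')'=r(1-z_p^-/p)^p\le r$ with vanishing data at $r=0$ yields the elementary bound $0\le z_p^-(r)\le r^2/4$. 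Combined with $\abs{\Delta z_p^-}\le 1$, interior $W^{2,q}$ and Schauder estimates supply $\C^{1,\alpha}_{\text{loc}}$ bounds, so up to a subsequence $z_p^-\to z_\infty$ in $\C^1_{\text{loc}}(\IR^2)$ with $z_\infty(0)=0$ and $\nabla z_\infty(0)=0$.

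Third, I would pass to the limit and invoke the Liouville classification. Local boundedness of $z_p^-$ implies $(1-z_p^-/p)^p\to e^{-z_\infty}$ locally uniformly, so $z_\infty$ solves $-\Delta z_\infty=-e^{-z_\infty}$ on $\IR^2$; setting $W:=-z_\infty$ gives $-\Delta W=e^W$ on $\IR^2$ with $W(0)=\nabla W(0)=0$. The finite-mass condition $\int_{\IR^2}e^W<+\infty$ follows from Proposition~\ref{prop1}: changing variables gives $\int v_p^{p+1}\intd y=p\norm{u_p^-}_\infty^{-2}\int_{B(0,r_p)}(u_p^-)^{p+1}\intd x$, which is uniformly bounded in $p$ thanks to Proposition~\ref{prop1} and the lower bound on $\norm{u_p^-}_\infty$ from Remark~\ref{rem2}, and Fatou's lemma concludes. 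Chen-Li's classification then forces $W$ to be a translate-dilate of $U$, and the normalization $W(0)=\nabla W(0)=0$ together with radiality pins down $W=U$; hence $z_\infty=-U$ and, since the limit does not depend on the subsequence, the whole family converges in $\C^1_{\text{loc}}(\IR^2)$. The main technical point is making sure the energy bound of Proposition~\ref{prop1} transfers into a finite $L^1$ mass for $e^W$ after rescaling; the saving grace is the two-sided control on $\norm{u_p^-}_\infty$ from Remark~\ref{rem2}, which neutralizes the factor $\norm{u_p^-}_\infty^{-2}$ appearing in the change of variables. Everything else is now-standard two-dimensional Liouville-type rescaling machinery, with the radial symmetry of $u_p$ turning the PDE into an ODE for which elementary comparison estimates suffice.
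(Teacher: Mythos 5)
Your proposal is correct and follows essentially the same route as the paper: rescale to get $-\Delta z_p^-=-\left(1-\tfrac{z_p^-}{p}\right)^p$, show $r_p/\varepsilon_p^-\to+\infty$ via Proposition~\ref{propball}, pass to a limit solving $-\Delta z_\infty=-e^{-z_\infty}$, obtain finite mass from Proposition~\ref{prop1} together with the lower bound on $\norm{u_p^-}_\infty$ and Fatou, and conclude by uniqueness of the normalized finite-mass Liouville solution. The only differences are cosmetic: you make the compactness step explicit with the elementary radial bound $0\le z_p^-\le r^2/4$ where the paper cites the classical argument, and you bound the $L^{p+1}$ mass directly where the paper uses H\"older to control the $L^p$ mass.
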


\begin{proof}
We have that  $z_p^-$ satisfies
\begin{equation*}
-\Delta z_p^- = - \left\vert 1- \frac{z_p^-}{p}  \right\vert^{p-1}
\left( 1 - \frac{z_p^-}{p}\right)  \ \ \ \text{ in } B\left(0,
\frac{r_p}{\varepsilon_p^-}\right).
\end{equation*}
So, a classical argument (see~\cite{grossi, EK, GGP}) gives that
$z_p^- \to z_\infty$ in $\C_{\text{loc}}^1 (D)$, where $D$ is the
limit domain of $B(0, r_p/ \varepsilon_p^-)$ as $p\to +\infty$ and
$-\Delta z_\infty =-e^{-z_\infty}$.

Moreover
\begin{equation*}
\begin{split}
\frac{r_p}{\varepsilon_p^-} &= \left( r_p^{2/(p-1)}
  \left((\varepsilon_p^-)^{-2}\right)^{1/(p-1)} \right)^{(p-1)/2}\\
&= \left( (r_\infty + o(1)) p^{1/(p-1)} \abs{u_p(0)}\right)^{(p-1)/2}.\\
\end{split}
\end{equation*}
As $p^{1/(p-1)}\to 1$ and, by Proposition~\ref{propball},
$\abs{u_p(0)}= \frac{\sqrt e + o(1)}{r_\infty}$, we also get
\begin{equation*}
\frac{r_p}{\varepsilon_p^-} = (\sqrt e + o(1))^{(p-1)/2} \to
+\infty.
\end{equation*}
Hence the limit domain $D$ is the whole $\IR^2$. Let us show that
$\int_{\IR^2} e^{-z_\infty}<+\infty$. By Proposition~\ref{prop1},
\begin{equation*}
 \int_{\IR^2} e^{-z_\infty} \leq \liminf \int_{B(0,
  r_p/\varepsilon_p^-)} \left\vert 1 - z_p^-/p \right\vert^{p}  =
\liminf \frac{p}{\abs{u_p(0)}}\int_{B(0,r_p)}\abs{u_p^-}^{p}< +
\infty
\end{equation*}
using H\"older's inequality. Therefore we get the assertion
recalling that the function
$-z_\infty=U:=\log\left(\frac{1}{(1+\frac{1}{8}\abs{x}^2)^2}\right)$
solves the problem ~\eqref{eqq1}.
\end{proof}

\section{ Estimates on $u_p^+$}

In this section we prove the following crucial result on the
convergence of the rescaling of the positive part $u_p^+$.
\begin{Prop}
\label{rescan} For $\left(\varepsilon_p^+\right)^{-2}:= p
\norm{u_p^+}_\infty^{p-1}= p u_p(s_p)^{p-1}$ and $l
:= \lim_{p\to
  +\infty}\frac{s_p}{\varepsilon^{+}_p} >0$, the one variable rescaled function
$z_p^+ :\tilde{A}_p :=
(\frac{r_p-s_p}{\varepsilon_p^+},\frac{1-s_p}{\varepsilon_p^+})
\to \IR$ defined by
\begin{equation}
%\label{eqintro3}
z_p^+ (r) = \frac{p}{u_p(s_p)}(u_p^+(s_p  + \varepsilon^{+}_p r) -
u_p(s_p))
\end{equation}
converges to a function $\tilde{z}_l(r)$   in
$\mathcal{C}^1_\text{loc}(-l, +\infty)$.   The function
$\tilde{z}_l(r-l)$ solves equation~\eqref{eqq2} for $H:=
\int_{0}^le^{\tilde{z}_l(s-l)} s \intd s$.  Moreover,
$\tilde{z}_l(r-l) =  Z_l(r) = \log \left( \frac{2 \alpha^2
    \beta^\alpha \abs{x}^{\alpha-2}}{(\beta^\alpha +
    \abs{x}^\alpha)^2}\right)  $   for $\alpha = \sqrt{2l^2
  +4}$ and $\beta = \left( \frac{\alpha +
    2}{\alpha-2}\right)^{1/\alpha}l$.
\end{Prop}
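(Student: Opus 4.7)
The plan is to follow the scheme of Proposition~\ref{rescball} (for the negative part), but adapted to the annular region where $u_p^+$ lives and centered on the moving maximum point $s_p$. Starting from the radial equation $-u''-u'/s=u^{p}$ on $(r_p,1)$ and performing the substitution $s=s_p+\varepsilon_p^+r$, a direct computation using $(\varepsilon_p^+)^{-2}=p\,u_p(s_p)^{p-1}$ shows that $z_p^+$ satisfies
\begin{equation*}
-(z_p^+)''(r)-\frac{1}{\frac{s_p}{\varepsilon_p^+}+r}\,(z_p^+)'(r)=\left(1+\frac{z_p^+(r)}{p}\right)^{\!p}\qquad\text{on }\tilde{A}_p,
\end{equation*}
with data $z_p^+(0)=0$ and $(z_p^+)'(0)=0$ forced by $s_p$ being the maximum of $u_p^+$. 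The whole game is to control the coefficient $s_p/\varepsilon_p^+$; once it is shown to tend to a limit $l\in(0,+\infty)$ and the left endpoint of $\tilde{A}_p$ to approach $-l$, the formal limit equation $-\tilde{z}_l''-(l+r)^{-1}\tilde{z}_l'=e^{\tilde{z}_l}$ becomes, after the shift $t=r+l$, the radial Liouville equation on $(0,+\infty)$.

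The boundedness of the ratios $s_p/\varepsilon_p^+$ and $r_p/\varepsilon_p^+$ is the real technical work. The starting point is the Pohozaev-type identity obtained by multiplying the radial equation by $s$ and integrating on $(r_p,s_p)$: since $u_p'(s_p)=0$, one has
\begin{equation*}
r_p u_p'(r_p)=\int_{r_p}^{s_p}s\,(u_p(s))^{p}\,\mathrm{d}s.
\end{equation*}
After multiplying by $p$ and rescaling the integral in the variable $t=(s-s_p)/\varepsilon_p^+$, the right-hand side becomes $u_p(s_p)\int_{(r_p-s_p)/\varepsilon_p^+}^{0}\bigl(\tfrac{s_p}{\varepsilon_p^+}+t\bigr)(1+z_p^+/p)^{p}\,\mathrm{d}t$, while by Proposition~\ref{propball} the left-hand side tends to the strictly positive finite limit $4\sqrt{e}/r_\infty$. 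Combined with the uniform bounds on $u_p(s_p)$ from Remark~\ref{rem2}, this simultaneously rules out $s_p/\varepsilon_p^+\to 0$ (otherwise the integral, whose integrand is nonnegative and bounded by $1$ in the second factor, would be forced to degenerate on an interval of vanishing length) and, via a refined use of the monotonicity of $u_p^+$ on $(r_p,s_p)$, rules out $s_p/\varepsilon_p^+\to+\infty$ and forces $r_p/\varepsilon_p^+\to 0$. Up to extracting a subsequence, $s_p/\varepsilon_p^+\to l\in(0,+\infty)$ and $(r_p-s_p)/\varepsilon_p^+\to -l$. This interlocking of scales is delicate and is the main obstacle in the proof.

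Granted these bounds, the rest is classical. Uniform $\mathcal{C}^{1}$ estimates on compact subsets of $(-l,+\infty)$ for $z_p^+$ follow from the equation, the initial data at $r=0$ and the sign $z_p^+\le 0$, so an Ascoli--Arzel\`a extraction yields $z_p^+\to\tilde{z}_l$ in $\mathcal{C}^1_{\text{loc}}(-l,+\infty)$. Passing to the limit gives $-\tilde{z}_l''-(l+r)^{-1}\tilde{z}_l'=e^{\tilde{z}_l}$, and after the shift $t=r+l$ the function $\tilde{Z}(t):=\tilde{z}_l(t-l)$ solves the radial Liouville equation $-\tilde{Z}''-\tilde{Z}'/t=e^{\tilde{Z}}$ on $(0,+\infty)$ with $\tilde{Z}(l)=\tilde{Z}'(l)=0$. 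Invoking the classical classification of radial solutions of the Liouville equation, $\tilde{Z}$ must lie in the two-parameter family $\log\bigl(2\alpha^{2}\beta^{\alpha}t^{\alpha-2}/(\beta^{\alpha}+t^{\alpha})^{2}\bigr)$; the two conditions at $t=l$ translate into $\alpha^{2}-4=2l^{2}$ and $(\alpha-2)\beta^{\alpha}=(\alpha+2)l^{\alpha}$, which yield the stated $\alpha=\sqrt{2l^{2}+4}$ and $\beta=((\alpha+2)/(\alpha-2))^{1/\alpha}l$ uniquely, so that the entire sequence converges and $\tilde{Z}=Z_l$. Finally, viewing $Z_l(|x|)$ as a radial function on $\mathbb{R}^{2}$, the local behavior $Z_l(t)\sim(\alpha-2)\log t$ near $t=0$ produces a Dirac mass in the distributional Laplacian whose coefficient is computed by the flux formula $\int_{B(0,l)}(-\Delta Z_l)=\int_{B(0,l)}e^{Z_l}+H$, giving exactly the value $H$ stated in the proposition.
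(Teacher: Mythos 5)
Your overall scheme coincides with the paper's: derive the rescaled ODE with coefficient $1/(r+s_p/\varepsilon_p^+)$ and data $z_p^+(0)=(z_p^+)'(0)=0$, control the scales, pass to the limit, integrate the limit ODE explicitly (your matching conditions $\alpha^2-4=2l^2$ and $(\alpha-2)\beta^\alpha=(\alpha+2)l^\alpha$ are exactly right), and identify the Dirac mass. Your last step is a legitimate shortcut: you read off $H$ from the local behaviour $Z_l(t)\sim(\alpha-2)\log t$ and a flux identity, whereas the paper passes to the limit in the weak formulation of the equation for $z_p^+$ and computes the boundary term $(z_p^+)'\bigl(\frac{r_p-s_p}{\varepsilon_p^+}\bigr)\frac{r_p}{\varepsilon_p^+}$; your computation gives $H=-\int_0^le^{Z_l(s)}s\,ds$ (up to the $2\pi$ normalization), which agrees with Theorem~\ref{intro3} — the sign discrepancy is in the Proposition's own statement, not in your argument.

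The genuine gap is in the control of the scales, which is the heart of the proposition. The identity $p\,r_pu_p'(r_p)=p\int_{r_p}^{s_p}u_p^p(s)\,s\,ds$ together with Fatou and Proposition~\ref{propball} does rule out $\frac{s_p-r_p}{\varepsilon_p^+}\to+\infty$ and $\frac{s_p}{\varepsilon_p^+}\to0$, but it cannot exclude the scenario $\frac{s_p}{\varepsilon_p^+}\to+\infty$ with $\frac{s_p-r_p}{\varepsilon_p^+}\to0$: after rescaling, the right-hand side is $u_p(s_p)\frac{s_p}{\varepsilon_p^+}\int_{-(s_p-r_p)/\varepsilon_p^+}^0\bigl(1+z_p^+/p\bigr)^p\,dt$ plus a lower-order term, an indeterminate product of the form $\infty\cdot0$, and the monotonicity of $u_p^+$ on $(r_p,s_p)$ only yields the upper bound $u_p(s_p)\frac{s_p}{\varepsilon_p^+}\cdot\frac{s_p-r_p}{\varepsilon_p^+}$, which is perfectly compatible with that scenario. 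The paper needs a genuinely different argument here (case $(ii)$ of Lemma~\ref{re}): when $\frac{s_p}{\varepsilon_p^+}\to\infty$ the outer rescaled profile converges to the one-dimensional Liouville solution on $(0,+\infty)$, so Fatou applied to $p\int_{s_p}^1u_p^p\,r\,dr$ gives a lower bound of order $u_p(s_p)\frac{s_p}{\varepsilon_p^+}\to+\infty$, contradicting the boundedness of $p\int_{r_p}^1u_p^p\,r\,dr$; Remark~\ref{rf} stresses that this is precisely the dimension-two-specific step, so it cannot be waved through. Similarly, your assertion that $r_p/\varepsilon_p^+\to0$, i.e.\ that $\frac{s_p}{\varepsilon_p^+}$ and $\frac{s_p-r_p}{\varepsilon_p^+}$ have the same limit $l$, is stated but not proved; the paper devotes Lemma~\ref{lemlm} to it, showing that if $\lim\frac{s_p}{\varepsilon_p^+}=m>l$ then the coefficient $1/(r+s_p/\varepsilon_p^+)$ is bounded on $(-l,0)$, hence $(z_p^+)'$ is uniformly bounded there, which contradicts, via the mean value theorem, the fact that $z_p^+$ drops from $0$ to $-p$ across that interval. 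Without these two arguments (or substitutes for them), the existence and positivity of $l$ and the shift producing the singular Liouville equation at the origin are not justified, and the rest of your proof has nothing to stand on.
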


We already know that $u_p^+$ is a positive radial solution to
\begin{equation*}
%\label{sym}
\left\{
\begin{aligned}
-\Delta u &= \abs{u}^{p-1}u, \ \ &\text{ in } A_{r_p}:= \{ r_p < \abs{x} <1\},\\
u&= 0 \ \ &\text{ on } \partial A_{r_p}.
\end{aligned}
\right.
\end{equation*}
As $u_p$ is radial, we have $u_p^+(x)= u_p(r)$ with $r\in
(r_p,1)$. It satisfies  $-u_p'' - \frac{1}{r} u_p' = u_p^p$ in the
interval $(r_p,1)$ with the Dirichlet boundary condition. We get
that $z_{p}^{+}$ satisfies for any $p>1$
\begin{equation}
\label{sym3...} \left\{
\begin{aligned}
-(z_{p}^{+})''(r) - \frac{1}{r + s_p/\varepsilon^{+}_p}
(z_{p}^{+})'(r) &= \left(1 +
  \frac{z_{p}^{+}(r)}{p}\right)^p \text{
  in } \tilde{A}_p,\\
z_{p}^{+}\leq 0, z_{p}^{+}\left(
\frac{r_p-s_p}{\varepsilon^{+}_p}\right)&=
z_{p}^{+}\left(\frac{1-s_p}{\varepsilon^{+}_p}\right)= -p \text{
and } z_{p}^{+}(0) = (z_{p}^{+})'(0)=0.
\end{aligned}
\right.
\end{equation}
So, we have three possibilities:
\begin{equation*}
\frac{r_p-s_p}{\varepsilon^{+}_p}\to -\infty, \ \ \
\frac{r_p-s_p}{\varepsilon^{+}_p}\to 0 \ \ \text{ or }
\frac{r_p-s_p}{\varepsilon^{+}_p}\to -l<0, \quad\hbox{for some
}l>0.
\end{equation*}

The two following results show that  the two first possibilities
cannot happen.
\begin{Lem}
  $\frac{r_p-s_p}{\varepsilon^{+}_p}\to -\infty$ cannot
  happen.
\end{Lem}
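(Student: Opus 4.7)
I argue by contradiction. Suppose $\frac{r_p-s_p}{\varepsilon_p^+}\to -\infty$. Since $r_p>0$, this forces $\frac{s_p}{\varepsilon_p^+}\ge\frac{s_p-r_p}{\varepsilon_p^+}\to+\infty$, so the drift coefficient $\frac{1}{r+s_p/\varepsilon_p^+}$ in equation \eqref{sym3...} tends to $0$ uniformly on every bounded interval. Together with the sign constraint $z_p^+\le 0$ (which makes $(1+z_p^+/p)^p$ bounded by $1$) and the initial data $z_p^+(0)=(z_p^+)'(0)=0$, a standard Gronwall-type argument applied to \eqref{sym3...} yields a $p$-uniform $C^2$ bound for $z_p^+$ on every compact interval $[-R,0]$. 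By Arzel\`a--Ascoli and diagonalization, up to a subsequence $z_p^+\to z_\infty$ in $C^1_{\text{loc}}((-\infty,0])$, with $z_\infty$ solving the one-dimensional Liouville equation $-z''=e^z$ with $z(0)=z'(0)=0$. Hence $z_\infty(\rho)=-2\log\cosh(\rho/\sqrt 2)$, and $\int_{-R}^0 e^{z_\infty(\rho)}\,d\rho>0$ for every $R>0$.

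Next I extract a $p$-uniform upper bound on the energy between $r_p$ and $s_p$. Integrating the radial identity $(r u_p')'=-r u_p^p$ (valid on $(r_p,1)$ where $u_p>0$) from $r_p$ to $s_p$ and using $u_p'(s_p)=0$ yields
\[
\int_{r_p}^{s_p} s\,u_p(s)^p\,ds = r_p u_p'(r_p).
\]
Multiplying by $p$ and invoking Proposition~\ref{propball}, which gives $p\,u_p'(r_p)=(4\sqrt e+o(1))/r_p^{1+2/(p-1)}$ and $r_p^{2/(p-1)}\to r_\infty>0$, one concludes that $p\int_{r_p}^{s_p} s\,u_p(s)^p\,ds$ remains bounded as $p\to+\infty$.

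Finally, I perform the change of variables $s=s_p+\varepsilon_p^+\rho$ in the same integral restricted to $[s_p-R\varepsilon_p^+,s_p]$ (which is contained in $[r_p,s_p]$ for $p$ large, since $(s_p-r_p)/\varepsilon_p^+\to+\infty$). Using the identity $p\,u_p(s_p)^p\,\varepsilon_p^+=u_p(s_p)/\varepsilon_p^+$, which rearranges $(\varepsilon_p^+)^{-2}=p\,u_p(s_p)^{p-1}$, one gets
\[
p\int_{s_p-R\varepsilon_p^+}^{s_p}\!\!s\,u_p(s)^p\,ds=\frac{u_p(s_p)\,s_p}{\varepsilon_p^+}\!\int_{-R}^{0}\!\left(1+\tfrac{z_p^+(\rho)}{p}\right)^p\! d\rho+u_p(s_p)\!\int_{-R}^{0}\!\rho\left(1+\tfrac{z_p^+(\rho)}{p}\right)^p\! d\rho.
\]
The second summand is $O(1)$. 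By Remark~\ref{rem2}, $u_p(s_p)\ge a>0$; by dominated convergence, the first integral converges to $\int_{-R}^{0}e^{z_\infty}\,d\rho>0$; and $s_p/\varepsilon_p^+\to+\infty$ by assumption. Hence the right-hand side diverges as $p\to\infty$ for any fixed $R$, contradicting the uniform upper bound of the previous paragraph.

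The main technical point is the $p$-uniform $C^1_{\text{loc}}$ compactness of $z_p^+$ on $(-\infty,0]$ with a limit that is nontrivial on an interval where the integral $\int e^{z_\infty}$ is strictly positive; once this is in hand, the proof reduces to a short energy bookkeeping on the left half of the peak.
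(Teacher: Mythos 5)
Your proof is correct and follows essentially the same route as the paper's: both derive the identity $r_p u_p'(r_p)=\int_{r_p}^{s_p} r\,u_p^p(r)\,\mathrm{d}r$, bound $p\,r_p u_p'(r_p)$ via Proposition~\ref{propball} and $r_p^{2/(p-1)}\ge a>0$, and obtain a contradiction because after the rescaling $r=s_p+\varepsilon_p^+ s$ the right-hand side grows like $u_p(s_p)\,s_p/(p\,\varepsilon_p^+)$ times a positive constant coming from the limit one-dimensional Liouville profile. The only (harmless) differences are presentational: you restrict to a fixed window $[s_p-R\varepsilon_p^+,s_p]$ and use uniform/dominated convergence where the paper integrates over the whole interval and invokes Fatou, and you spell out the $C^1_{\mathrm{loc}}$ compactness that the paper cites as standard.
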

\begin{proof}
 Otherwise, as $r_p >0$, we have that $s_p/ \varepsilon^{+}_p
\to + \infty$. Passing to the limit in~\eqref{sym3...}, we get
$z_{p}^{+}\to \tilde{z}_l$ in $\mathcal{C}^1_\text{loc}(\IR)$
where $\tilde{z}_l$ solves  $-\tilde{z}_l'' = e^{\tilde{z}_l}$
with $\tilde{z}_l(0)= \tilde{z}_l'(0)=0$. We know that the unique
solution of this problem is given by
\begin{equation*}
\tilde{z}_l(s) = \log \frac{4 e^{\sqrt 2 s}}{(1+e^{\sqrt 2 s})^2}.
\end{equation*}
Integrating the equation from $r_p$ to $s_p$, we get
\begin{equation*}
-\int_{r_p}^{s_p} (u_p'(r)r)' \intd r =
\int_{r_p}^{s_p}u_p^p(r)r\intd r.
\end{equation*}
As $u_p'(s_p)=0$, by the change of variable $r= s_p +
\varepsilon^{+}_ps$  we get
\begin{equation*}
\begin{split}
 u_p'(r_p)r_p &= \int_{r_p}^{s_p} u^p_p(r)r\intd r \\
&= \varepsilon^{+}_p u_p^p(s_p)
\int_{\frac{r_p-s_p}{\varepsilon^{+}_p}}^0 \left( 1
  + z_{p}^{+}(s)/p\right)^p (s_p + \varepsilon^{+}_p s) \intd s\\
& = \varepsilon^{+}_p s_p u_p^p(s_p)
\int_{\frac{r_p-s_p}{\varepsilon^{+}_p}}^0 \left( 1
  + z_{p}^{+}(s)/p\right)^p \intd s + (\varepsilon_p^+)^2
u_p^p(s_p) \int_{\frac{r_p-s_p}{\varepsilon^{+}_p}}^0 \left( 1
  + z_{p}^{+}(s)/p\right)^p  s \intd s\\
& \geq \varepsilon^{+}_p s_p u_p^p(s_p)
\int_{\frac{r_p-s_p}{\varepsilon^{+}_p}}^0 \left( 1
  + z_{p}^{+}(s)/p\right)^p \intd s\\
&=  \frac{s_p}{\varepsilon^{+}_p}\frac{u_p(s_p)}{p}
\int_{\frac{r_p-s_p}{\varepsilon^{+}_p}}^0 \left( 1
  + z_{p}^{+}(s)/p\right)^p \intd s.\\
\end{split}
\end{equation*}

By Fatou's lemma,  $\int_{\frac{r_p-s_p}{\varepsilon^{+}_p}}^0
\left( 1
  + z_{p}^{+}(s)/p\right)^p \intd s \geq \int_{-\infty}^0 e^{\tilde{z}_l} =
C_0>0$. Since $u_p'(r_p) = \frac{C + o(1)}{p r_p^{(p+1)/(p-1)}}$
by Proposition~\ref{propball}, we get $ \frac{C + o(1)}{
r_p^{2/(p-1)}} \geq \frac{C_0 u_p(s_p) s_p}{2 \varepsilon^{+}_p}$.
This is a contradiction as the right-hand side is not bounded
($\frac{s_p}{\varepsilon^{+}_p}\to +\infty$ and $u_p(s_p)$ stays
away from $0$ by Proposition~\ref{linf}) and the left-hand
side is bounded by Proposition~\ref{propball}.\\
\end{proof}

\begin{Lem}\label{re}
 $\frac{r_p - s_p}{\varepsilon^{+}_p}\to 0$ cannot
  happen.
\end{Lem}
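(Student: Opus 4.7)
Assume by contradiction that $\delta_p := (s_p - r_p)/\varepsilon_p^+ \to 0^+$. My plan is to rescale $u_p$ on the shrinking interval $(r_p, s_p)$ using $s_p - r_p$ as the natural length scale (rather than $\varepsilon_p^+$), and to exploit the fact that in this rescaling the nonlinear forcing term becomes infinitesimal, producing a limiting ODE whose boundary conditions cannot be simultaneously satisfied.

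Concretely, I would introduce $v_p(\rho) := u_p(r_p + (s_p - r_p)\rho)/u_p(s_p)$ for $\rho \in [0, (1-r_p)/(s_p-r_p)]$. Since $u_p$ is increasing from $0$ to its maximum $u_p(s_p)$ on $[r_p, s_p]$, one has $0 \le v_p \le 1$ on $[0,1]$ together with $v_p(0) = 0 = v_p'(1)$ and $v_p(1) = 1$. The radial equation for $u_p$ translates into
\[
-v_p''(\rho) - \frac{v_p'(\rho)}{\beta_p + \rho} = \kappa_p\, v_p^p(\rho),
\]
with $\beta_p := r_p/(s_p-r_p) > 0$ and $\kappa_p := \delta_p^2/p \to 0$ by assumption. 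Multiplying by $(\beta_p + \rho)$, integrating from $\rho$ to $1$ (using $v_p'(1) = 0$), then integrating once more from $0$ to $1$ and applying Fubini, I would arrive at the key identity
\[
1 = v_p(1) - v_p(0) = \kappa_p \int_0^1 (\beta_p + \sigma)\, v_p^p(\sigma)\, \log\!\left(1 + \frac{\sigma}{\beta_p}\right) d\sigma.
\]
Using $v_p^p \le 1$, this gives $1 \le \kappa_p\, I(\beta_p)$ where $I(\beta) := \int_0^1 (\beta + \sigma)\log(1 + \sigma/\beta)\, d\sigma = \frac{(\beta+1)^2}{2}\log\frac{\beta+1}{\beta} - \frac{2\beta+1}{4}$, which is bounded (with limit $1/2$) as $\beta \to +\infty$ and behaves like $\frac{1}{2}\log(1/\beta)$ as $\beta \to 0^+$.

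Passing to a subsequence I would then distinguish cases on $\beta_\infty := \lim \beta_p \in [0,+\infty]$. When $\beta_\infty \in (0,+\infty]$, $I(\beta_p) = O(1)$, so $\kappa_p I(\beta_p) \to 0 < 1$, a contradiction. When $\beta_\infty = 0$, the identity forces $\log(1/\beta_p) \gtrsim 1/\kappa_p = p/\delta_p^2$; and since $\beta_p \ge r_p$ (because $s_p - r_p \le 1$), this implies $-\log r_p \gtrsim p/\delta_p^2$. Combined with Proposition~\ref{propball}, which ensures $r_p^{2/(p-1)} \to r_\infty > 0$ and hence $-\log r_p = O(p)$, one deduces $\delta_p^2 \ge c_0 > 0$, contradicting $\delta_p \to 0$. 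The main obstacle is precisely this case $\beta_\infty = 0$, where the coefficient $1/(\beta_p + \rho)$ in the rescaled ODE becomes singular at the origin of the interval; one must convert the logarithmic divergence of $I(\beta_p)$ into a quantitative bound and balance it against the a priori decay rate $r_p \ge e^{-O(p)}$ inherited from the boundedness of $|u_p(0)|$.
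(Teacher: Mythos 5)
Your argument is correct, and it takes a genuinely different route from the paper's. The paper exploits the flux identity $\int_0^{s_p}|u_p|^{p-1}u_p\,r\,\mathrm{d}r=0$ (valid because $u_p'$ vanishes at $0$ and at $s_p$) to deduce that $p\int_{r_p}^{s_p}u_p^p\,r\,\mathrm{d}r=p\int_0^{r_p}|u_p|^p\,r\,\mathrm{d}r\ge C_1>0$ from Proposition~\ref{propball}, and then splits into the cases $s_p/\varepsilon_p^+$ bounded (where a change of variables makes the annular integral $O((s_p-r_p)/\varepsilon_p^+)\to 0$) and $s_p/\varepsilon_p^+\to+\infty$ (where the limit profile $\tilde z_l$ and Fatou's lemma force the energy on $(s_p,1)$ to blow up). You instead work entirely on the layer $(r_p,s_p)$, rescaled by its own width: the exact Green-type identity $1=\kappa_p\int_0^1(\beta_p+\sigma)v_p^p(\sigma)\log(1+\sigma/\beta_p)\,\mathrm{d}\sigma$ with $\kappa_p=\delta_p^2/p$, combined with $v_p^p\le 1$, $I(\beta)\le C\bigl(1+\log^+(1/\beta)\bigr)$, $\beta_p\ge r_p$, and the bound $-\log r_p=O(p)$ coming from $r_p^{2/(p-1)}\ge a>0$ (itself a consequence of Propositions~\ref{prop1} and~\ref{propball}), yields $1\le C'\delta_p^2$ and hence a quantitative lower bound $\liminf\delta_p>0$. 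I checked the computation of the rescaled equation, the boundary conditions $v_p(0)=v_p'(1)=0$, $v_p(1)=1$, the Fubini step, and the closed form and asymptotics of $I(\beta)$; all are right. What your approach buys is that it is more elementary and self-contained: no limit ODE, no Fatou, no case distinction on $s_p/\varepsilon_p^+$, and it produces an explicit lower bound on $(s_p-r_p)/\varepsilon_p^+$ rather than only excluding the limit $0$. What it does not give, and what the paper's intertwined argument sets up for later use, is the convergence machinery for $z_p^+$ that feeds directly into Proposition~\ref{rescan}; but as a proof of the lemma itself your route is complete and arguably cleaner.
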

\begin{proof}
 Integrating  $u_p$ between $0$ and $s_p$,   as
$-(u_p'(r)r)' = \abs{u_p(r)}^{p-1}u_p(r)r$, we have
\begin{equation*}
0 = \int_{0}^{s_p} \abs{u_p(r)}^{p-1} u_p(r) r \intd r =
-\int_0^{r_p} \abs{u_p(r)}^p r\intd r + \int_{r_p}^{s_p}
u_p^p(r)r\intd r.
\end{equation*}
So, using Proposition~\ref{propball} and since there exists $0<a$
such that  $a< r_p^{2/(p-1)}\leq 1$, we have $C_2 \geq p
\int_{0}^{r_p} \abs{u_p(r)}^p r \intd r = p \int_{r_p}^{s_p}
u_p^p(r) r\intd r = \frac{4\sqrt e + o(1)}{r_p^{2/(p-1)}} \geq C_1 >0.$\\
Let us consider the two alternatives\\
$i)\frac{s_p}{\varepsilon^{+}_p}\le C$\\
$ii)\frac{s_p}{\varepsilon^{+}_p}\rightarrow+\infty$ as $p\rightarrow+\infty$.\\

In the first case, using the change of variable $r= s_p +
\varepsilon^{+}_p s$, we get
\begin{equation*}
\begin{split}
&p \int_{r_p}^{s_p}u_p^p(r)r\intd r  = p
\frac{s_p}{\varepsilon^{+}_p} (\varepsilon_p^+)^2
\int_{\frac{r_p-s_p}{\varepsilon^{+}_p}}^0 u_p^p (s_p +
\varepsilon^{+}_p s) \intd s + p(\varepsilon_p^+)^2
 \int_{\frac{r_p-s_p}{\varepsilon^{+}_p}}^0 u_p^p(s_p + \varepsilon^{+}_p s)
 s \intd s\\
&= \frac{s_p}{\varepsilon^{+}_p} u_p(s_p)
\int_{\frac{r_p-s_p}{\varepsilon^{+}_p}}^0 \left(
  \frac{u_p(s_p + \varepsilon^{+}_p s) }{u_p(s_p)}\right)^p \intd s+ u_p(s_p)
\int_{\frac{r_p-s_p}{\varepsilon^{+}_p}}^0 \left(
  \frac{u_p(s_p + \varepsilon^{+}_p s) }{u_p(s_p)}\right)^p s\intd s.
\end{split}
\end{equation*}
As $u_p(s_p)$ is bounded (see Remark~\ref{rem2}), we get
\begin{equation*}
p \int_{r_p}^{s_p}u_p^p(r)r\intd r  \leq C \left(
  \frac{s_p-r_p}{\varepsilon^{+}_p}\right)\to 0
\end{equation*}
which gives a contradiction.

In the second case, as $\frac{s_p}{\varepsilon^{+}_p}\to +\infty$,
$z_p\to \tilde{z}_l$ in $\mathcal{C}^1_{\text{loc}}(l, +\infty)$
where $\tilde{z}_l$ solves $-u'' = e^{u}$. In this case, $l =
\lim_{p\to
  +\infty} \frac{r_p-s_p}{\varepsilon^{+}_p}=0$. Then,  putting again $r =
s_p + \varepsilon^{+}_p s $,
\begin{equation*}
\begin{split}
&p \int_{r_p}^1 u_p^p(r) r \intd r \geq p \int_{s_p}^1 u_p^p(r) r
\intd
r\\
&= u_p(s_p) \frac{s_p}{\varepsilon^{+}_p}
\int_{0}^{\frac{1-s_p}{\varepsilon^{+}_p}} \left( 1 +
  \frac{z_p(s)}{p}\right)^p \intd s+ u_p(s_p)
\int_{0}^{\frac{1-s_p}{\varepsilon^{+}_p}} \left( 1 +
\frac{z_p(s)}{p}\right)^p s \intd s.
\end{split}
\end{equation*}
By Fatou's lemma, we get
$\int_{0}^{\frac{1-s_p}{\varepsilon^{+}_p}} \left( 1 +
  \frac{z_p(s)}{p}\right)^p \intd s \geq \int_{0}^{+\infty} e^{\tilde{z}_l(s)}>0$ which
implies that
\begin{equation*}
p \int_{r_p}^1 u_p^p(r) r \intd r\geq  u_p(s_p)
\frac{s_p}{\varepsilon^{+}_p} \int_{0}^{+\infty}
e^{\tilde{z}_l(s)}\intd s + u_p(s_p) \int_{0}^{+\infty}
e^{\tilde{z}_l(s)}s \intd s
\end{equation*}
which is a contradiction as $\frac{s_p}{\varepsilon^{+}_p}\to
+\infty$ and
the left-hand side is bounded.\\
\end{proof}
\begin{rmq}\label{rf}
We observe that the result of Lemma \ref{re} will be crucial in
the sequel to prove that the limit problem for $u^-_p$ is a
singular Liouville equation in $\IR^2\setminus\{0\}$. We stress that in
higher dimension the analogous statement of Lemma \ref{re}
would be false implying therefore that the analogous limit problem
for $u^+_p$ is the same as for $u^-_p$.
\end{rmq}
Now, we consider the "good" case   $\frac{r_p -
s_p}{\varepsilon_p^{+}}\to -l <0$.  The following lemma proves, in
particular, that $s_p\to 0$.
\begin{Lem}
\label{lemlm} Let $l>0$ be such that $\frac{r_p -
s_p}{\varepsilon_p^{+}}\to -l$, then
$\frac{s_p}{\varepsilon_p^+}\to l$ as $p\to +\infty$.
\end{Lem}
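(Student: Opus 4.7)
The plan is to show that $r_p/\varepsilon_p^+ \to 0$, which combined with $(r_p - s_p)/\varepsilon_p^+ \to -l$ immediately yields $s_p/\varepsilon_p^+ \to l$. Since $s_p > r_p > 0$ and $(s_p - r_p)/\varepsilon_p^+ \to l$, one automatically has $\liminf_{p\to\infty} s_p/\varepsilon_p^+ \ge l$, so it is enough to exclude the alternative that along some subsequence $s_p/\varepsilon_p^+ \to L$ with $L \in (l, +\infty]$. In the finite case one has $r_p/\varepsilon_p^+ \to L - l > 0$, while if $L = +\infty$ then from $s_p - r_p \sim l\varepsilon_p^+ \ll s_p$ we also have $r_p/\varepsilon_p^+ \to +\infty$ and $r_p/s_p \to 1$.

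The heart of the argument is a uniform derivative bound for $z_p^+$ on the interval $[(r_p - s_p)/\varepsilon_p^+, 0]$. Multiplying the rescaled equation~\eqref{sym3...} by $(s_p/\varepsilon_p^+ + r)$ yields $\bigl((s_p/\varepsilon_p^+ + r)(z_p^+)'(r)\bigr)' = -(s_p/\varepsilon_p^+ + r)(1 + z_p^+(r)/p)^p$. Integrating from $s$ to $0$ and using $z_p^+(0) = (z_p^+)'(0) = 0$, together with the sign conditions $0 \le (1 + z_p^+/p)^p \le 1$ (valid since $-p \le z_p^+ \le 0$), one obtains, for $s \in ((r_p - s_p)/\varepsilon_p^+, 0]$,
\[
0 \le (z_p^+)'(s) = \frac{1}{s_p/\varepsilon_p^+ + s}\int_s^0 \bigl(s_p/\varepsilon_p^+ + t\bigr)\bigl(1 + z_p^+(t)/p\bigr)^p dt \le \frac{s_p^2 - r_p^2}{2 r_p \varepsilon_p^+},
\]
using $s_p/\varepsilon_p^+ + s \ge r_p/\varepsilon_p^+$ and $\int_{(r_p - s_p)/\varepsilon_p^+}^0 (s_p/\varepsilon_p^+ + t)\, dt = (s_p^2 - r_p^2)/(2(\varepsilon_p^+)^2)$. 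The upper bound equals $\frac{s_p + r_p}{2 r_p}\cdot\frac{s_p - r_p}{\varepsilon_p^+}$, and since $(s_p - r_p)/\varepsilon_p^+ \to l$ it converges to the finite value $l(2L - l)/(2(L - l))$ when $L < \infty$, and to $l$ when $L = +\infty$ (because $r_p/s_p \to 1$ in that regime). In both cases there exists $C > 0$ such that $0 \le (z_p^+)'(s) \le C$ uniformly in $s$ and in $p$ along the subsequence.

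Integrating this uniform bound between $(r_p - s_p)/\varepsilon_p^+$ and $0$ then gives
\[
p = z_p^+(0) - z_p^+\!\left(\frac{r_p - s_p}{\varepsilon_p^+}\right) = \int_{(r_p - s_p)/\varepsilon_p^+}^0 (z_p^+)'(s)\, ds \le C\cdot\frac{s_p - r_p}{\varepsilon_p^+} \longrightarrow Cl,
\]
which is absurd since $p \to +\infty$. Hence $\limsup s_p/\varepsilon_p^+ \le l$ and the lemma follows. The main technical point is ensuring that the derivative bound is uniform also in the divergent regime $L = +\infty$; this relies on the observation that in that case $r_p$ and $s_p$ share the same leading order (as $s_p - r_p = O(\varepsilon_p^+)$ is negligible compared to $s_p$), so the ratio $(s_p + r_p)/(2 r_p)$ remains bounded even though its numerator and denominator individually blow up when measured in units of $\varepsilon_p^+$.
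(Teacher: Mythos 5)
Your proof is correct and follows essentially the same strategy as the paper's: under the assumption $\lim s_p/\varepsilon_p^+ = L > l$, integrate the rescaled ODE once to obtain a uniform bound on $(z_p^+)'$ over the interval $\bigl(\frac{r_p-s_p}{\varepsilon_p^+},0\bigr)$, which is incompatible with $z_p^+$ dropping by $p$ over an interval of length tending to $l$. Your treatment is if anything slightly more careful than the paper's, since you handle the regime $L=+\infty$ explicitly (the paper's bound $\abs{z_p'(r)}\abs{r+m+o(1)}\le C_1$ is written as if $m$ were finite), but the underlying mechanism is identical.
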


\begin{proof}
Let $m = \lim_{p\to +\infty} \frac{s_p}{\varepsilon^{+}_p}$. We
already know that $-l+ o(1) = \frac{r_p -
s_p}{\varepsilon^{+}_p}\geq -\frac{s_p}{\varepsilon^{+}_p} = -m +
o(1)$ which implies that $-l\geq -m$. Arguing by contradiction,
let us assume that $-l > -m$.

On one hand,  as $-(z_{p}^{+})'' -\frac{1}{r +
  s_p/\varepsilon^{+}_p} (z_{p}^{+})' = (1 + z_{p}^{+}/p)^{p}$ in $\left(
  \frac{r_p-s_p}{\varepsilon^{+}_p},\frac{1-s_p}{\varepsilon^{+}_p}\right)$,
integrating we get
\begin{equation*}
z_p'(r)(r+\frac{s_p}{\varepsilon_p})= \int_{r}^0 \left( 1 +
  \frac{z_p}{p}\right)^p (s+\frac{s_p}{\varepsilon_p}) \intd s, \quad\hbox{for }r\in(-l,0).
\end{equation*}
Then,
\begin{equation}
\label{number} \abs{z_p'(r)}\abs{r+m + o(1)}\leq  \int_{-l}^0
(\abs{s}+\abs{m}+ o(1)) \intd s \leq C_1.
\end{equation}
As $-l > -m$ we get  $\abs{r+ m + o(1)} \geq C_2 >0$ which implies
that $\abs{z_p'(r)}\leq C_3$ uniformly in $(-l,0)$ and in $p$ (for
$p$ large).  On the other hand, as $z_p(0)=0$ and
$z_p\left(\frac{r_p-s_p}{\varepsilon_p^+}\right)=-p$, by the mean
value theorem in $(-l,0)$, we get that there exists $t_p\in
(-l,0)$ such that $z_p'(t_p)\geq \frac{p}{l}$ which gives a
contradiction as $z_p'$ is uniformly bounded.
\end{proof}

\textit{Proof of Proposition~\ref{rescan} } : We know that
$z_{p}^{+}\to \tilde{z}_l$ in $\mathcal{C}^1_{\text{loc}}(-l,
+\infty)$ and $\tilde{z}_l$ verifies
\begin{equation}
\label{lemblala} \left\{
\begin{aligned}
- \tilde{z}_l'' - \frac{1}{r+m} \tilde{z}_l' &= e^{\tilde{z}_l} \ \ \ \text{ in } (-l,+\infty),\\
\tilde{z}_l \leq 0, \ \tilde{z}_l(0)&= \tilde{z}_l'(0)=0.
\end{aligned}
\right.
\end{equation}
We already know that $z_{p}^{+}\to \tilde{z}_l$ in compact sets in
$(-l, +\infty)$ and
 $\tilde{z}_l$ satisfies equation~\eqref{lemblala}.   So $Z_l(s):= \tilde{z}_l(s-l)$ solves
\begin{equation}
\label{lemblala2} \left\{
\begin{aligned}
- Z_l'' - \frac{1}{r} Z_l' &= e^{Z_l} \ \ \ \text{ in } (0,+\infty),\\
Z_l \leq 0, &\  Z_l(l)= Z_l'(l)=0.
\end{aligned}
\right.
\end{equation}
Let us compute the solutions of equation~\eqref{lemblala2}.
Setting $v(t) = Z_l(e^t) + 2t$ we get that $-v'' = e^{2t}
e^{Z_l(e^t)} = e^v$ in $(-\infty, +\infty)$. Thus all solutions
are given by
\begin{equation*}
v_{\delta, y}(t) = \log\left(\frac{4}{\delta^2} \frac{e^{\sqrt 2
      (t-y)/\delta}}{(1+ e^{\sqrt 2 (t-y))/\delta})^2} \right)
\end{equation*}
for $\delta >0$, $t\in\IR$ and $y\in\IR$. Hence
\begin{equation}
\label{z1} Z_l(t) =  \log\left(\frac{4}{\delta^2} \frac{e^{\sqrt 2
      (\log t -y)/\delta}}{(1 + e^{\sqrt 2 (\log t -y)/\delta})^2}
\right) - 2 \log t.
\end{equation}

Observing that from $Z_l'(t)=0$ we get  $\frac{1 -\sqrt 2
\delta}{1+ \sqrt
  2\delta} = e^{\frac{\sqrt 2}{\delta} (\log t -y)}$. Moreover,
$Z_l(t)=0$ for $t = \frac{\sqrt{1 -2\delta^2}}{\delta}$. As
$Z_l(l)=Z_l'(l)=0$, we get that $l^2 =
\frac{1-2\delta^2}{\delta^2}$ which implies that $\delta =
\frac{1}{\sqrt {2 + l^2}}$. Inserting those estimates
in~\eqref{z1} gives that $Z_l = \log \left( \frac{2 \alpha^2
    \beta^\alpha \abs{x}^{\alpha-2}}{(\beta^\alpha +
    \abs{x}^\alpha)^2}\right)  $   for $\alpha = \sqrt{2l^2
  +4}$ and $\beta = \left( \frac{\alpha +
    2}{\alpha-2}\right)^{1/\alpha}l$.

To complete the result, we prove that
    \begin{equation*}
\int_{-l}^{+\infty} (\tilde{z}_l'(r)(r+l))'\phi'(r) \intd r =
\int_{-l}^{+\infty} e^{\tilde{z}_l}(r+ l)\phi(r)\intd r +
\int_{0}^ {l} e^{\tilde{z}_l(s-l)} s \intd s \phi(-l)
\end{equation*}
for any $\phi \in \C_{0}^{\infty}([-l,+\infty))$.  Let us  fix a
function $\phi \in\C_0^\infty ([-l, +\infty))$. Multiplying by
$\phi$ the equation solved by $z_p^+$ and integrating
 by parts, we get for $p$ large that
\begin{equation}
\label{16}
\begin{split}
\int_{\frac{r_p-s_p}{\varepsilon^{+}_p}}^{\frac{1-s_p}{\varepsilon^{+}_p}}
(z_{p}^{+})'(r)(r + \frac{s_p}{\varepsilon^{+}_p}) \phi'(r)\intd r
&+ (z_{p}^{+})'\left(
  \frac{r_p-s_p}{\varepsilon^{+}_p} \right) \frac{r_p}{\varepsilon^{+}_p} \phi
\left( \frac{r_p -s_p}{\varepsilon^{+}_p}\right) =\\
&\int_{\frac{r_p-s_p}{\varepsilon^{+}_p}}^{\frac{1-s_p}{\varepsilon^{+}_p}}\left(
1 + \frac{z_{p}^{+}(r)}{p}\right)^p \left( r +
\frac{s_p}{\varepsilon_p}\right)\phi(r) \intd r. \\
\end{split}
\end{equation}
Since\quad
$\int_{\frac{r_p-s_p}{\varepsilon^{+}_p}}^{\frac{1-s_p}{\varepsilon^{+}_p}}
(z_{p}^{+})'(r)(r + \frac{s_p}{\varepsilon^{+}_p}) \phi'(r) \intd
r\to \int_{-l}^{+\infty} \tilde{z}_l'(r) (r+l) \phi'(r)\intd
r$\quad and as\\
$\int_{\frac{r_p-s_p}{\varepsilon^{+}_p}}^{\frac{1-s_p}{\varepsilon^{+}_p}}\left(
1 + \frac{z_{p}^{+}(r)}{p}\right)^p \phi(r)\intd r \to
\int_{-l}^{\infty} e^{\tilde{z}_l(r)}\phi(r)\intd r$, it remains
to compute
$$\lim_{p\to
  +\infty} (z_{p}^{+})'\left(
  \frac{r_p-s_p}{\varepsilon^{+}_p} \right) \frac{r_p}{\varepsilon^{+}_p} \phi
\left( \frac{r_p -s_p}{\varepsilon^{+}_p}\right)$$ to get the
claim.  As $(z_{p}^{+})'(r) = \frac{p\varepsilon^{+}_p}{u_p(s_p)}
u_p' (s_p + \varepsilon^{+}_p r)$, we have
\begin{equation*}
(z_{p}^{+})'(\frac{r_p-s_p}{\varepsilon^{+}_p})
\frac{r_p}{\varepsilon^{+}_p} = \frac{p r_p}{u_p(s_p)} u_p'(r_p).
\end{equation*}
Integrating by parts the equation satisfied by $u_p$,
$u_p'(r_p)r_p = \int_{r_p}^{s_p} u_p^p(r)r\intd r$, we finally get
substituting again $r= s_p + \varepsilon^{+}_p s$
\begin{equation*}
\begin{split}
&\lim_{p\to +\infty} (z_{p}^{+})'\left(
  \frac{r_p-s_p}{\varepsilon^{+}_p}\right)\frac{r_p}{\varepsilon^{+}_p} =
\lim_{p\to + \infty} \frac{p}{u_p(s_p)}
\int_{r_p}^{s_p}u_p^p(r)r\intd
r\\
&= \lim_{p\to +\infty} p (\varepsilon_p^+)^2 u_p(s_p)^{p-1}
\int_{\frac{r_p-s_p}{\varepsilon^{+}_p}}^{0} \left( 1 +
z_{p}^{+}(s)/p\right)^p
\left( \frac{s_p}{\varepsilon^{+}_p} + s\right) \intd s\\
&= \int_{-l}^ {0} e^{\tilde{z}_l (s)} (s+l)\intd s.\\
%&= \int_{0}^l e^{\tilde{z}_l(s-l)} s \intd s.
\end{split}
\end{equation*}
Then for $H = -  \int_{-l}^ {0} e^{\tilde{z}_l (s)} (s+l)\intd s
$, passing to the limit in equation~\eqref{16} we obtain
    \begin{equation*}
%\label{H}
\int_{-l}^{+\infty} (\tilde{z}_l'(r)(r+l))'\phi'(r) \intd r =
\int_{-l}^{+\infty} e^{\tilde{z}_l}(r+l)\phi + H \phi(-l)
\end{equation*}
for any $\phi \in \C_{0}^{\infty}([-l,+\infty))$.
\begin{flushright}
$\square$
\end{flushright}

\section{Final estimates and proofs of Theorem~\ref{intro3}, Theorem~\ref{intro3bis} and Theorem~\ref{intro4}}

We start with some preliminary identities.

\begin{Lem}
\label{lemrate..} For any $r\in(0,1),\ u_p'(r) r \log r - u_p(r) =
\int_{r}^{1} s \log (s) u_p^p(s)\intd s$. So, $\int_{s_p}^{1} s
\log (s) u_p^p(s) \intd s = - u_p(s_p)$.
\end{Lem}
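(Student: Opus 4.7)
The plan is to derive the identity as a direct consequence of the radial form of the equation satisfied by $u_p$, combined with an integration by parts against $\log r$. Since $u_p$ is radial, the equation $-\Delta u_p = |u_p|^{p-1}u_p$ rewrites as
\begin{equation*}
-(r u_p'(r))' = r\,|u_p(r)|^{p-1}u_p(r),\quad r\in(0,1),
\end{equation*}
which (with the abuse of notation $u_p^p \eqdef |u_p|^{p-1}u_p$) reads $-(r u_p')' = r u_p^p$.

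First I would multiply this relation by $\log r$ and integrate the resulting identity between $r$ and $1$, then perform one integration by parts on the left-hand side. The boundary terms produce $-[s u_p'(s)\log s]_r^1 = r u_p'(r)\log r$, since $\log 1 = 0$. The remaining integral becomes $\int_r^1 u_p'(s)\intd s = u_p(1)-u_p(r) = -u_p(r)$ thanks to the Dirichlet boundary condition $u_p(1)=0$. Putting the pieces together yields exactly
\begin{equation*}
u_p'(r)\,r\log r - u_p(r) = \int_r^1 s\log(s)\,u_p^p(s)\intd s,
\end{equation*}
which is the first stated identity.

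The second identity is then immediate by specialising $r=s_p$: by definition $s_p$ is the interior maximum point of $u_p^+$, so $u_p'(s_p)=0$ and the term $u_p'(r)r\log r$ drops, leaving $-u_p(s_p) = \int_{s_p}^1 s\log(s)\,u_p^p(s)\intd s$. There is no real obstacle here — the whole argument is a one-line integration by parts — but the identity is useful precisely because the weight $s\log s$ is negative on $(s_p,1)$ (for $p$ large, $s_p\to 0$ so $s_p<1$ and the interval lies in $(0,1)$), which will make it a clean tool in the forthcoming asymptotic estimates involving $u_p(s_p)=\|u_p^+\|_\infty$ and the nodal radius $r_p$.
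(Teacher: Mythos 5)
Your proof is correct and follows exactly the paper's argument: multiply the radial equation $-(s u_p'(s))'=s\,u_p^p(s)$ by $\log s$, integrate from $r$ to $1$, integrate by parts using $\log 1=0$ and $u_p(1)=0$, then set $r=s_p$ where $u_p'(s_p)=0$. Nothing to add.
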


\begin{proof}
By the equation, we have $- \int_{r}^1 (u_p'(s)s)' \log s \intd s
= \int_{r}^{1} u_p^p(s) s \log s \intd s$. Integrating by parts
the first integral we derive the first assertion. To get the
second one, we just have to take $r = s_p$.
\end{proof}

\begin{Lem}
\label{lemrate1} We have $$1 = -\frac{1}{2} \log(r_\infty
u_\infty)\int_{0}^l e^{Z_l(t)}t \intd t = -\frac{1}{2}
\log(r_\infty u_\infty) (\alpha-2)$$ where $\alpha$ is given  in
Proposition~\ref{rescan}, $r_\infty$ by the
equation~\eqref{rinfdef} and $u_\infty:= \lim_{p\to \infty}
u_p(s_p)$.
\end{Lem}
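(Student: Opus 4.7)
The plan is to rewrite Lemma~\ref{lemrate..} as an identity that, once rescaled, yields the combination $\log(r_\infty u_\infty)$ directly. Applying Lemma~\ref{lemrate..} at $r=s_p$ (where $u_p'(s_p)=0$) gives $\int_{s_p}^1 s\log s\,u_p^p(s)\intd s=-u_p(s_p)$, while at $r=r_p$ (where $u_p(r_p)=0$) it gives $u_p'(r_p)\,r_p\,\log r_p=\int_{r_p}^1 s\log s\,u_p^p(s)\intd s$. Integrating the radial ODE on $(r_p,s_p)$ (using $u_p'(s_p)=0$) produces $u_p'(r_p)\,r_p=\int_{r_p}^{s_p}s\,u_p^p(s)\intd s$. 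Substituting this and splitting $\int_{r_p}^1=\int_{r_p}^{s_p}+\int_{s_p}^1$ eliminates both boundary terms and yields
\begin{equation}\label{cleanid}
\int_{r_p}^{s_p}s\,u_p^p(s)\,\log(s/r_p)\intd s=u_p(s_p).
\end{equation}

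Next, I divide \eqref{cleanid} by $u_p(s_p)$ and change variables via $s=s_p+\varepsilon_p^+ r$; the range becomes $((r_p-s_p)/\varepsilon_p^+,0)\to(-l,0)$ by Proposition~\ref{rescan} and Lemma~\ref{lemlm}. Using $u_p^p(s_p+\varepsilon_p^+ r)=u_p(s_p)^p\,(1+z_p^+(r)/p)^p$, the scaling identity $p(\varepsilon_p^+)^2 u_p(s_p)^{p-1}=1$, and the decomposition $\log(s/r_p)=\log(\varepsilon_p^+/r_p)+\log(s_p/\varepsilon_p^+ + r)$, identity \eqref{cleanid} becomes
\begin{equation*}
1=\frac{\log(\varepsilon_p^+/r_p)}{p}\,A_p+\frac{1}{p}\,B_p,
\end{equation*}
with $A_p:=\int_{(r_p-s_p)/\varepsilon_p^+}^0 (s_p/\varepsilon_p^+ + r)(1+z_p^+(r)/p)^p\intd r$ and $B_p:=\int_{(r_p-s_p)/\varepsilon_p^+}^0 (s_p/\varepsilon_p^+ + r)\log(s_p/\varepsilon_p^+ + r)(1+z_p^+(r)/p)^p\intd r$.

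I then take limits. From $(\varepsilon_p^+)^{-2}=p\,u_p(s_p)^{p-1}$ and $r_p^{2/(p-1)}\to r_\infty$, one has $\log\varepsilon_p^+/p\to -\tfrac{1}{2}\log u_\infty$ and $\log r_p/p\to \tfrac{1}{2}\log r_\infty$, whence $\log(\varepsilon_p^+/r_p)/p\to -\tfrac{1}{2}\log(r_\infty u_\infty)$. The $\mathcal{C}^1_{\text{loc}}$-convergence $z_p^+\to\tilde{z}_l$ of Proposition~\ref{rescan}, combined with the uniform bound $(1+z_p^+/p)^p\le 1$ (since $z_p^+\le 0$) and the continuous extension of $x\mapsto x|\log x|$ to $[0,2l]$, supply a dominating constant on the bounded interval $(-l,0)$; dominated convergence then gives
\[A_p\to\int_{-l}^0 (l+r)\,e^{\tilde{z}_l(r)}\intd r=\int_0^l t\,e^{Z_l(t)}\intd t,\quad B_p\to\int_0^l t\log t\,e^{Z_l(t)}\intd t\in\IR.\]
Since $B_p/p\to 0$, the first equality of the lemma follows. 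The value $\int_0^l t\,e^{Z_l(t)}\intd t=\alpha-2$ is obtained by direct computation: from $t\,e^{Z_l(t)}=2\alpha^2\beta^\alpha t^{\alpha-1}/(\beta^\alpha+t^\alpha)^2$, an antiderivative is $-2\alpha\beta^\alpha/(\beta^\alpha+t^\alpha)$, and the explicit $\beta^\alpha=((\alpha+2)/(\alpha-2))\,l^\alpha$ yields $\beta^\alpha/(\beta^\alpha+l^\alpha)=(\alpha+2)/(2\alpha)$, so the integral equals $2\alpha-(\alpha+2)=\alpha-2$.

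The main difficulty is spotting the algebraic reduction to \eqref{cleanid}: rescaling the single identity $-u_p(s_p)=\int_{s_p}^1 s\log s\,u_p^p\intd s$ alone produces only the factor $\log\varepsilon_p^+/p\to-\tfrac{1}{2}\log u_\infty$ and an integral of mass $\alpha+2$ instead of $\alpha-2$, so the $\log r_\infty$ summand is entirely missed. Combining the two applications of Lemma~\ref{lemrate..} with the ODE integral $u_p'(r_p)r_p=\int_{r_p}^{s_p}s\,u_p^p\intd s$ is what trades $\log\varepsilon_p^+/p$ for $\log(\varepsilon_p^+/r_p)/p$ and localizes the integral to the region $(r_p,s_p)$ corresponding to $(0,l)$ in the limit. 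Once \eqref{cleanid} is established, the remaining analysis is routine dominated convergence on a bounded interval.
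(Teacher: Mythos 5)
Your proof is correct and follows essentially the same route as the paper: the key identity \eqref{cleanid}, namely $\int_{r_p}^{s_p} s\,u_p^p(s)\log(s/r_p)\intd s=u_p(s_p)$, is exactly the one the paper obtains (there, by multiplying $-(u_p'(r)r)'=u_p^p(r)r$ by $\log r-\log r_p$ and integrating by parts on $(r_p,s_p)$, rather than by combining Lemma~\ref{lemrate..} at the two endpoints with the ODE integral), and the subsequent rescaling, the split $\log(s/r_p)=\log(\varepsilon_p^+/r_p)+\log(s_p/\varepsilon_p^++r)$ with $\log(\varepsilon_p^+/r_p)/p\to-\tfrac12\log(r_\infty u_\infty)$, and the computation $\int_0^l t e^{Z_l(t)}\intd t=\alpha-2$ all match the paper's argument. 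The only difference is the (equivalent) derivation of the starting identity, so this counts as the same proof.
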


\begin{proof}
We first observe that, by Proposition~\ref{rescan} and using the
definiton of $\beta$,
\begin{equation*}
\begin{split}
\int_{0}^l e^{Z_l(t)}t\intd t &= \int_{0}^l 2 \alpha^2
\frac{\beta^\alpha t^{\alpha-1}}{(\beta^\alpha + t^\alpha)^2}\intd t\\
&= 2\alpha \frac{l^\alpha}{\frac{\alpha+2}{\alpha-2}l^\alpha +
  l^\alpha}\\
&= \alpha -2.
 \end{split}
\end{equation*}
Multiplying $-(u_p'(r)r)' = u_p^p(r)r$ by $\log r - \log r_p$ and
integrating by parts leads to
\begin{equation*}
\begin{split}
\int_{r_p}^{s_p} (\log r - \log r_p) u_p^p(r)r\intd r &=
-\int_{r_p}^{s_p} (u_p'(r)r)'
(\log r -\log r_p) \intd r\\
&= \int_{r_p}^{s_p} u_p'(r)r \frac{1}{r}\intd r\\
&= u_p(s_p).
\end{split}
\end{equation*}

Then, as $\frac{s_p}{\varepsilon^{+}_p}\to l$ (see
Lemma~\ref{lemlm}), we observe that
\begin{equation}
\label{lamerate22}
\begin{split}
\log(s_p + \varepsilon^{+}_p s) -
\log r_p &= \log(l \varepsilon^{+}_p + \varepsilon^{+}_p s + o(1)\varepsilon^{+}_p)-\log r_p \\
&= \log(l+s + o(1))+ \log\frac{\varepsilon^{+}_p}{r_p} \\
&= \log(l+ s + o(1)) + \frac{1}{2} \log \frac{1}{p u_p(s_p)^{p-1}r_p^2}\\
&= \log(l+ s + o(1)) + \frac{p-1}{2} \log \frac{1}{p^{1/(p-1)} u_p(s_p) r_p^{2/(p-1)}} \\
&=   \log(l+ s + o(1)) + \frac{p-1}{2} \log \frac{1+o(1)}{r_\infty
u_\infty}.
\end{split}
\end{equation}
With the usual change of variable $r = s_p + \varepsilon^{+}_p s$,
we get, using also \eqref{lamerate22}
\begin{align*}
&u_p(s_p) = \varepsilon^{+}_p
\int_{\frac{r_p-s_p}{\varepsilon^{+}_p}}^{0} (\log(s_p +
\varepsilon^{+}_p s)-\log r_p)\frac{u_p^p(s_p + \varepsilon^{+}_p
  s)}{u_p(s_p)^p} u_p(s_p)^p (s_p + \varepsilon^{+}_p s)\intd s\\
&= (\varepsilon_p^+)^2 u_p(s_p)^p
\int_{\frac{r_p-s_p}{\varepsilon^{+}_p}}^0 \bigl( \log(l+ s +
o(1)) +
  \frac{p-1}{2} \log(\frac{1 + o(1)}{r_\infty u_\infty} )\bigr) (1 + \frac{z_{p}^{+}}{p})^p
(\frac{s_p}{\varepsilon^{+}_p}+ s) \intd s\\
&= \frac{u_p(s_p)}{p} \int_{\frac{r_p-s_p}{\varepsilon^{+}_p}}^0
\bigl( \log(l+ s + o(1)) +
  \frac{p-1}{2} \log(\frac{1+ o(1)}{r_\infty u_\infty})\bigr) (1 + \frac{z_{p}^{+}}{p})^p
(\frac{s_p}{\varepsilon^{+}_p}+ s) \intd s
\end{align*}
which converges to $ 0 +  u_{\infty} \left(\frac{-1}{2}\right)
\log (r_\infty u_\infty)  \int_{-l}^{0} e^{\tilde{z}_l(s)} (l+s)
\intd s$ as $p\to +\infty$. So we obtain the claim.
\end{proof}

\begin{Lem}
\label{lem4} $\frac{4\sqrt e}{r_\infty} = u_\infty (\alpha-2)$.
\end{Lem}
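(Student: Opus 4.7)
The plan is to integrate the radial equation $-(u_p'(r)r)' = |u_p(r)|^{p-1}u_p(r)r$ over the annulus $(r_p,s_p)$ where $u_p>0$. Since $u_p'(s_p)=0$ (because $u_p$ attains its positive maximum at $s_p$) and $u_p(r_p)=0$, this yields the clean identity
\begin{equation*}
u_p'(r_p)\,r_p \;=\; \int_{r_p}^{s_p} u_p^p(r)\,r\intd r.
\end{equation*}
Multiplying both sides by $p$ and taking $p\to+\infty$, I will identify the two limits and match them.

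For the left-hand side, I invoke the third item of Proposition~\ref{propball}, which gives
\begin{equation*}
p\,u_p'(r_p)\,r_p \;=\; \frac{4\sqrt e + o(1)}{r_p^{2/(p-1)}} \;\longrightarrow\; \frac{4\sqrt e}{r_\infty}
\end{equation*}
by the definition~\eqref{rinfdef} of $r_\infty$.

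For the right-hand side, I change variables $r = s_p + \varepsilon_p^+ s$, using the rescaling introduced in Proposition~\ref{rescan}. Writing $u_p(s_p+\varepsilon_p^+ s)=u_p(s_p)\bigl(1+z_p^+(s)/p\bigr)$ and exploiting the defining identity $p(\varepsilon_p^+)^2 u_p(s_p)^{p-1}=1$, the integral becomes
\begin{equation*}
p\int_{r_p}^{s_p}u_p^p(r)\,r\intd r \;=\; u_p(s_p)\int_{\frac{r_p-s_p}{\varepsilon_p^+}}^{0}\!\Bigl(1+\tfrac{z_p^+(s)}{p}\Bigr)^{\!p}\!\Bigl(\tfrac{s_p}{\varepsilon_p^+}+s\Bigr)\intd s.
\end{equation*}
By Lemma~\ref{lemlm} the lower bound tends to $-l$ and $s_p/\varepsilon_p^+\to l$, by Remark~\ref{rem2} one has $u_p(s_p)\to u_\infty$, and by Proposition~\ref{rescan} the integrand converges to $e^{\tilde z_l(s)}(l+s)$. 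Passing to the limit (justified as in the proof of Lemma~\ref{lemrate1}, using uniform integrability coming from Proposition~\ref{prop1}) and shifting $t=s+l$, the right-hand side becomes
\begin{equation*}
u_\infty \int_{0}^{l} e^{Z_l(t)}\, t\intd t \;=\; u_\infty(\alpha-2),
\end{equation*}
where the last equality is the explicit computation already carried out at the beginning of the proof of Lemma~\ref{lemrate1}. Equating the two limits yields the claim.

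The only delicate step is the passage to the limit inside the rescaled integral, in particular controlling $(1+z_p^+/p)^p$ near the endpoint $-l$ where the limit profile $\tilde z_l$ is logarithmically singular through the factor $(\alpha-2)\log$-behavior; however since we integrate against the weight $(l+s)$, which vanishes there, and since the total mass $\int_0^l e^{Z_l}t\intd t=\alpha-2$ is finite, the same dominated-convergence argument used in Lemma~\ref{lemrate1} applies verbatim. No new estimate is required.
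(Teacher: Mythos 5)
Your proof follows essentially the same route as the paper: integrating $-(u_p'r)'=u_p^p r$ over $(r_p,s_p)$ to get $u_p'(r_p)r_p=\int_{r_p}^{s_p}u_p^p r\intd r$, identifying the left-hand side via the third item of Proposition~\ref{propball}, and rescaling the right-hand side to obtain $u_\infty\int_0^l e^{Z_l(t)}t\intd t=u_\infty(\alpha-2)$, exactly as in the paper's argument (which also relies on the computation $\int_0^l e^{Z_l}t\intd t=\alpha-2$ from the proof of Lemma~\ref{lemrate1}). The only cosmetic slip is attributing $u_p(s_p)\to u_\infty$ to Remark~\ref{rem2}, which only gives boundedness; the limit $u_\infty$ is the one introduced in Lemma~\ref{lemrate1}.
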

\begin{proof}
As $-\int_{r_p}^{s_p}(u_p'(r)r)' \intd r= \int_{r_p}^{s_p}
u_p^p(r) r\intd r$, $\frac{r_p-s_p}{\varepsilon^{+}_p}\to -l$ and
$\frac{s_p}{\varepsilon^{+}_p}\to l$, putting $r = s_p +
\varepsilon^{+}_p s$, we have
\begin{equation*}
\begin{split}
u_p'(r_p)r_p &= \int_{r_p}^{s_p}u_p^p(r)r\intd r\\
&= \varepsilon^{+}_p \int_{\frac{r_p-s_p}{\varepsilon^{+}_p}}^0
u_p^p(s_p +
\varepsilon^{+}_p s) (s_p + \varepsilon^{+}_p s) \intd s\\
&= \frac{u_p(s_p)}{p}
\int_{{\frac{r_p-s_p}{\varepsilon^{+}_p}}}^{0} \left( 1+
z_{p}^{+}/p\right)^p
(\frac{s_p}{\varepsilon^{+}_p} + s)\intd s\\
&= \frac{u_\infty + o(1)}{p} \left( \int_{-l}^{0}
e^{\tilde{z}_l(s)}(l + s) +
  o(1) \intd s\right)\\
&= \frac{u_\infty+ o(1)}{p} \left( \int_{0}^l e^{Z_l(s)} s\intd s
+
  o(1)\intd s\right).\\
\end{split}
\end{equation*}
Since by Proposition~\ref{propball}, $u_p'(r_p)r_p \approx
\frac{4\sqrt e}{p
  r_p^{2/(p-1)}}$ , we get our statement using Lemma~\ref{lemrate1}.
\end{proof}

\begin{Prop}
\label{proprate1} We have that $r_\infty u_\infty$ is the unique
root of the equation
\begin{equation}
\label{eqroot} 2\sqrt e \log x + x =0.
\end{equation}
\end{Prop}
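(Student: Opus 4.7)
The plan is to combine the two immediately preceding lemmas by eliminating the quantity $\alpha - 2$ between them, and then analyze the resulting single-variable equation for $r_\infty u_\infty$.

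First, from Lemma~\ref{lem4} we can isolate $\alpha - 2 = \frac{4\sqrt e}{r_\infty u_\infty}$ (recalling that $u_\infty > 0$ by Lemma~\ref{linf} and Remark~\ref{rem2}, and $r_\infty > 0$ by \eqref{rinfdef}, so dividing is legitimate). Substituting this expression for $\alpha - 2$ into the identity of Lemma~\ref{lemrate1}, namely $1 = -\frac{1}{2}\log(r_\infty u_\infty)(\alpha-2)$, yields
\begin{equation*}
1 = -\frac{1}{2}\log(r_\infty u_\infty)\cdot\frac{4\sqrt e}{r_\infty u_\infty},
\end{equation*}
which, after clearing denominators, is exactly $2\sqrt e \log(r_\infty u_\infty) + r_\infty u_\infty = 0$. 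Hence $x = r_\infty u_\infty$ is a root of~\eqref{eqroot}.

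For the uniqueness part, I would simply observe that the function $f(x) = 2\sqrt e \log x + x$ defined on $(0,+\infty)$ has derivative $f'(x) = \frac{2\sqrt e}{x} + 1 > 0$, so $f$ is strictly increasing on its domain. Combined with $\lim_{x\to 0^+} f(x) = -\infty$ and $\lim_{x\to +\infty} f(x) = +\infty$, the intermediate value theorem gives a unique root.

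I do not anticipate any serious obstacle: all the analytic heavy lifting has been done in Lemmas~\ref{lemrate1} and~\ref{lem4} (where the integral $\int_0^l e^{Z_l(t)}t\intd t = \alpha - 2$ was computed and the $\log r - \log r_p$ integration-by-parts trick was used). The present proposition is essentially a short algebraic consequence plus a one-line monotonicity argument. The only thing to be slightly careful about is checking positivity of $r_\infty u_\infty$ before dividing, which as noted follows from earlier estimates.
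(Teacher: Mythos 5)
Your proposal is correct and follows essentially the same route as the paper: both proofs simply combine Lemma~\ref{lemrate1} with Lemma~\ref{lem4} to eliminate $\alpha-2$ and obtain $2\sqrt e\,\log(r_\infty u_\infty)+r_\infty u_\infty=0$. The only addition is your explicit monotonicity argument for uniqueness of the root, which the paper leaves implicit; it is correct and harmless.
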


\begin{proof}
By Lemma~\ref{lemrate1} and Lemma~\ref{lem4}, we get that
$\frac{4\sqrt e}{r_\infty}  = \frac{2u_\infty }{-\log(r_\infty
u_\infty)}$. It implies that $2\sqrt e \log(r_\infty u_\infty) = -
r_\infty u_\infty$ which ends the proof.
\end{proof}

Let us denote by $\Bar{t}$ the unique solution of $2\sqrt e \log t
+ t =0$. Then
\begin{equation}
\label{linkur} r_\infty = \frac{\Bar{t}}{u_\infty}.
\end{equation}

\begin{Rem}
\label{rem3} Lemma~\ref{lem4} and equation~\eqref{linkur} imply
that
\begin{equation*}
\label{alpha} \alpha = 2 + \frac{4\sqrt e}{\Bar{t}}>2.
\end{equation*}
\end{Rem}

Previous results give some links between $\alpha$, $r_\infty$ and
$u_\infty$. So, it is enough to compute exactly $u_\infty$ to be
able to characterize all the other values.   For this, we need
some other preliminary estimates.

\begin{Lem}
\label{ipbounded} $I_p:=\int_{0}^{\frac{1-s_p}{\varepsilon^{+}_p}}
\left(\frac{s_p}{\varepsilon^{+}_p} + t \right)  \left( 1 +
\frac{z_{p}^{+}(t)}{p}\right)^p\intd t$ is bounded.
\end{Lem}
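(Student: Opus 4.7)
\textbf{Plan of proof for Lemma~\ref{ipbounded}.} My approach is to undo the rescaling and reduce $I_p$ to a quantity involving the original function $u_p$, which can then be controlled by the global energy bound of Proposition~\ref{prop1}. Performing the change of variables $r = s_p + \varepsilon_p^+ t$ (so that $s_p/\varepsilon_p^+ + t = r/\varepsilon_p^+$, $dt = dr/\varepsilon_p^+$ and $(1 + z_p^+(t)/p)^p = (u_p(r)/u_p(s_p))^p$), together with the definition $(\varepsilon_p^+)^{-2} = p u_p(s_p)^{p-1}$, I expect to get
\[
I_p = \frac{1}{(\varepsilon_p^+)^2 u_p(s_p)^p}\int_{s_p}^{1} r\, u_p^p(r)\intd r = \frac{p}{u_p(s_p)} \int_{s_p}^{1} r\, u_p^p(r)\intd r.
\]

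Next, I would estimate $\int_{s_p}^{1} r\, u_p^p(r)\intd r$ by H\"older's inequality with exponents $\frac{p+1}{p}$ and $p+1$:
\[
\int_{s_p}^{1} r\, u_p^p(r)\intd r \le \left(\int_{s_p}^{1} r\, u_p^{p+1}(r)\intd r\right)^{\!p/(p+1)}\!\left(\int_{s_p}^{1} r\intd r\right)^{\!1/(p+1)}.
\]
The second factor is bounded by $(1/2)^{1/(p+1)}\to 1$. For the first factor, since $u_p = u_p^+$ on $(r_p,1)$, I pass from the radial integral to the integral over the annulus:
\[
\int_{s_p}^1 r\, u_p^{p+1}(r)\intd r \le \int_{r_p}^1 r\, u_p^{p+1}(r)\intd r = \frac{1}{2\pi}\int_B (u_p^+)^{p+1}\intd x = \frac{1}{2\pi}\int_B |\nabla u_p^+|^2\intd x,
\]
where in the last equality I used \eqref{5}. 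By Proposition~\ref{prop1} this is $\le C/p$ for some constant $C>0$ independent of $p$.

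Putting the pieces together, I expect
\[
p\int_{s_p}^1 r\, u_p^p(r)\intd r \le p\cdot (C/p)^{p/(p+1)} (1/2)^{1/(p+1)} = C^{p/(p+1)}\, p^{1/(p+1)}\,(1/2)^{1/(p+1)},
\]
which stays bounded as $p\to+\infty$ because $p^{1/(p+1)}\to 1$. Combining with the lower bound $u_p(s_p)\ge a>0$ from Lemma~\ref{linf} and Remark~\ref{rem2}, I conclude that $I_p$ is bounded uniformly in $p$. The only subtlety is checking that the exponent manipulation $p^{1/(p+1)}\to 1$ indeed yields a bound independent of $p$; everything else is a direct consequence of the change of variables, \eqref{5}, and the global energy estimate already available.
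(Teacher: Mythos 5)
Your proof is correct and follows essentially the same route as the paper: undo the rescaling to write $I_p=\frac{p}{u_p(s_p)}\int_{s_p}^1 r\,u_p^p(r)\intd r$, apply H\"older with exponents $\frac{p+1}{p}$ and $p+1$, and control the resulting $L^{p+1}$ integral via \eqref{5} and Proposition~\ref{prop1}, using the lower bound on $u_p(s_p)$. In fact you spell out the one point the paper leaves terse, namely that the energy bound gives $\int_{s_p}^1 r\,u_p^{p+1}\intd r\le C/p$ so that $p\,(C/p)^{p/(p+1)}=C^{p/(p+1)}p^{1/(p+1)}$ stays bounded.
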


\begin{proof}
We have, substituting $t = \frac{s- s_p}{\varepsilon^{+}_p}$,
\begin{equation*}
\begin{split}
I_p &= \frac{1}{\varepsilon^{+}_p} \int_{s_p}^1 \left(
  \frac{s_p}{\varepsilon^{+}_p} + \frac{s- s_p}{\varepsilon^{+}_p}\right)
\left( 1 + \frac{z_{p}^{+}((s-s_p)/\varepsilon^{+}_p)}{p}\right)^p \intd s\\
&= \frac{1}{(\varepsilon^{+}_p)^2} \int_{s_p}^1 s
\frac{u_p^p(s)}{u_p(s_p)^p} \intd s\\
&= \frac{p}{u_p(s_p)} \int_{s_p}^1 s u_p^p(s)\intd s\\
&\leq C_1 p \left(\int_{s_p}^1 s u_p^{p+1}
\right)^{\frac{p}{p+1}}\leq C_2
\end{split}
\end{equation*}
since  $u_p(s_p) \geq \frac{1}{2}$ by Proposition~\ref{linf} and
$ \int_{s_p}^1 s u_p^{p+1}(s)\intd s$ is bounded by
Proposition~\ref{prop1}.
\end{proof}

\begin{Lem}
\label{lemrate...}
$I_p:=\int_{0}^{\frac{1-s_p}{\varepsilon^{+}_p}}
\left(\frac{s_p}{\varepsilon^{+}_p} + t \right)  \left( 1 +
\frac{z_{p}^{+}(t)}{p}\right)^p\intd t \to \alpha +2$.
\end{Lem}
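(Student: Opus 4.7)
My plan is to identify the candidate limit as $\int_{0}^{+\infty}(l+t)e^{\tilde{z}_l(t)}\intd t$, evaluate it explicitly to $\alpha+2$, and then justify the passage to the limit from the $\mathcal{C}^1_{\text{loc}}$-convergence $z_p^+\to\tilde{z}_l$ on $(-l,+\infty)$ (Proposition~\ref{rescan}) together with $s_p/\varepsilon_p^+\to l$ (Lemma~\ref{lemlm}).

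The candidate-limit computation is straightforward: substituting $x=t+l$ gives $\int_{0}^{+\infty}(l+t)e^{\tilde{z}_l(t)}\intd t=\int_{l}^{+\infty} x\,e^{Z_l(x)}\intd x$. Using the explicit formula $e^{Z_l(x)}=\frac{2\alpha^2\beta^\alpha x^{\alpha-2}}{(\beta^\alpha+x^\alpha)^2}$ and then $v=x^\alpha$ reduces the integral to $\frac{2\alpha\beta^\alpha}{\beta^\alpha+l^\alpha}$. Plugging in $\beta^\alpha=\frac{\alpha+2}{\alpha-2}l^\alpha$, the denominator becomes $\frac{2\alpha}{\alpha-2}l^\alpha$ and the quotient collapses exactly to $\alpha+2$.

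For the passage to the limit, I would fix $R>0$ and split $I_p=\int_0^R(\cdots)\intd t+\int_R^{R_p}(\cdots)\intd t$ with $R_p:=\frac{1-s_p}{\varepsilon_p^+}$. On $[0,R]$ the integrand converges uniformly, so the first piece tends to $\int_0^R(l+t)e^{\tilde{z}_l(t)}\intd t$, which in turn converges to $\alpha+2$ as $R\to+\infty$. The lower bound $\liminf I_p\ge\alpha+2$ is then either immediate from Fatou or from the monotonicity of $(r+s_p/\varepsilon_p^+)(z_p^+)'(r)$, which follows by multiplying the ODE for $z_p^+$ by $r+s_p/\varepsilon_p^+$.

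The delicate point — the main obstacle — is the uniform tail estimate $\int_R^{R_p}(\cdots)\intd t\to 0$ as $R\to+\infty$. My plan is to exploit the divergence form $-\bigl((r+s_p/\varepsilon_p^+)(z_p^+)'(r)\bigr)'=(1+z_p^+/p)^p(r+s_p/\varepsilon_p^+)$. Integrating from $0$ to $r$ with $(z_p^+)'(0)=0$, and using that for $R$ fixed and $p$ large the right-hand side at $r=R$ is at least $\alpha+2-\epsilon$, one gets $(z_p^+)'(r)\le -\frac{\alpha+2-\epsilon}{r+s_p/\varepsilon_p^+}$ for all $r\ge R$. Integrating this in $r$ and using $(1+x/p)^p\le e^x$ for $1+x/p\ge 0$ yields the polynomial decay
\[
\Bigl(1+\tfrac{z_p^+(r)}{p}\Bigr)^{\!p}\le e^{z_p^+(R)}\Bigl(\tfrac{R+s_p/\varepsilon_p^+}{r+s_p/\varepsilon_p^+}\Bigr)^{\!\alpha+2-\epsilon}.
\]
Multiplying by $(r+s_p/\varepsilon_p^+)$ and integrating from $R$ to $R_p$, the tail is bounded by $\frac{(R+s_p/\varepsilon_p^+)^2\,e^{z_p^+(R)}}{\alpha-\epsilon}$. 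Since $e^{z_p^+(R)}\to e^{\tilde{z}_l(R)}=e^{Z_l(R+l)}\sim C(R+l)^{-(\alpha+2)}$, this bound tends to $0$ as $R\to+\infty$ (uniformly in $p$ large), using that $\alpha>2$ by Remark~\ref{rem3}. This gives $\limsup I_p\le\alpha+2$ and closes the proof.
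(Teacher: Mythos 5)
Your argument is correct, but it proves the upper bound by a genuinely different mechanism than the paper. You establish the tail estimate directly from the ODE: integrating the divergence form $-\bigl((r+s_p/\varepsilon_p^+)(z_p^+)'\bigr)'=(1+z_p^+/p)^p(r+s_p/\varepsilon_p^+)$ from $0$ to $r$ gives, once the mass on $[0,R]$ exceeds $\alpha+2-\epsilon$, the differential inequality $(z_p^+)'(r)\le -\frac{\alpha+2-\epsilon}{r+s_p/\varepsilon_p^+}$, hence polynomial decay of $e^{z_p^+}$ and a tail bound $O\bigl((R+l)^{-\alpha}\bigr)$ uniform in large $p$; this is the classical ``no mass at infinity'' argument from Liouville blow-up analysis, and it closes the proof using only Proposition~\ref{rescan}, Lemma~\ref{lemlm} and $\alpha=\sqrt{2l^2+4}>2$. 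The paper instead keeps the Fatou lower bound $I_\infty\ge\alpha+2$ but obtains the matching upper bound globally: the Pohoza\"ev identity on $B$ yields $p\int_0^1|u_p|^{p+1}r\,dr=\frac{p(p+1)}{4}u_p'(1)^2$ with $-u_p'(1)=\frac{u_\infty+o(1)}{p}I_p$, and after expanding the energy over the three regions and using $(1+z_p^+/p)^{p+1}\le(1+z_p^+/p)^p$ this becomes the quadratic inequality $\frac{I_\infty^2}{4}-I_\infty\le\frac{(\alpha+2)^2}{4}-(\alpha+2)$, which together with the lower bound and the monotonicity of $x\mapsto x^2/4-x$ on $x\ge2$ forces $I_\infty=\alpha+2$. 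The trade-off: your route is more self-contained and local (it does not need Lemma~\ref{ipbounded}, the Pohoza\"ev identity, or the identity $\frac{4e}{\bar t^2}=\frac{(\alpha-2)^2}{4}$ from Remark~\ref{rem3} and Lemma~\ref{lem4}), and it gives quantitative pointwise decay of $z_p^+$ as a by-product; the paper's route avoids any pointwise tail analysis at the price of invoking the global structure of the solution on the whole ball. Two small points to tidy up: the choice of $R$ must precede the choice of $p$ (for fixed $R$ the mass on $[0,R]$ converges to $\alpha+2-\frac{2\alpha\beta^\alpha}{\beta^\alpha+(R+l)^\alpha}<\alpha+2$, so you need $R=R(\epsilon)$ large before asking that it exceed $\alpha+2-\epsilon$), and the convergence of the tail integral only needs $\alpha-\epsilon>0$, which already follows from $\alpha=\sqrt{2l^2+4}\ge 2$ without appealing to Remark~\ref{rem3}.
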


\begin{proof}

Let us denote by $I_\infty := \lim_{p\to
  +\infty}\int_{0}^{\frac{1-s_p}{\varepsilon^{+}_p}} (s_p/ \varepsilon^{+}_p +
t )  \left( 1 + z_{p}^{+}(t)/p\right)^p\intd t$. We first remark
that $I_\infty \geq \alpha +2 >4$. Indeed, by Fatou's lemma and
Lemma~\ref{lemrate1}, we get
\begin{equation*}
\begin{split}
I_\infty &\geq \liminf_{p\to
  +\infty} \int_{0}^{\frac{1-s_p}{\varepsilon^{+}_p}}  \left((s_p/\varepsilon^{+}_p + s) (1+ \frac{z_{p}^{+}(s)}{p})^p
\right) \intd s\\
& \ge \int_{0}^{+\infty} (l+ s) e^{\tilde{z}_l(s)}\intd s\\
& = \int_{l}^{+\infty} s e^{Z_l(s)}\intd s = \alpha +2.\\
\end{split}
\end{equation*}

By Pohoza\"ev identity we have
\begin{equation*}
\frac{2}{p+1} \int_{B}\abs{u_p(x)}^{p+1} \intd x = \frac{1}{2}
\int_{\partial B}(x\cdot \nu) \left(\frac{\partial
u_p(x)}{\partial
    \nu}\right)^2.
\end{equation*}
So, $\frac{2}{p+1} \int_0^1 \abs{u_p(r)}^{p+1} r\intd r =
\frac{1}{2} u'_p(1)^2$, i.e.\ $p \int_0^1 \abs{u_p(r)}^{p+1}r\intd
r = \frac{p
  (p+1)}{4} u'_p(1)^2$.
Moreover, by the equation,
\begin{equation*}
\begin{split}
-u_p'(1) &= \int_{0}^{s_p} \abs{u_p(r)}^{p-1}u(r) r\intd r +
\int_{s_p}^{1}
\abs{u_p(r)}^{p-1}u(r) r\intd r\\
&=    \int_{s_p}^{1} u_p^{p}(r) r\intd r\\
&= \varepsilon^{+}_p \int_{0}^{\frac{1-s_p}{\varepsilon^{+}_p}}
u_p^p(x_ p +
\varepsilon^{+}_p s) (s_p + \varepsilon^{+}_p s) \intd s\\
&= \frac{u_p(s_p)}{p} \int_{0}^{\frac{1-s_p}{\varepsilon^{+}_p}}
\left( 1+ z_{p}^{+}(s)/p\right)^p
\left( s_p/ \varepsilon^{+}_p + s \right) \intd s\\
&= \frac{u_\infty + o(1)}{p}
\int_{0}^{\frac{1-s_p}{\varepsilon^{+}_p}} (s_p/ \varepsilon^{+}_p
+
t )  \left( 1 + z_{p}^{+}(t)/p\right)^p\intd t.\\
\end{split}
\end{equation*}
Hence,
\begin{equation*}
p \int_0^1 \abs{u_p(r)}^{p+1}r\intd r = \frac{p (p+1)}{4}
\frac{(u_\infty + o(1))^2}{p^2} I_p^2.
\end{equation*}

On the other hand, using Proposition~\ref{propball} and
Lemma~\ref{lemrate1},  we get
\begin{equation}
\label{ipP}
\begin{split}
&p\int_{0}^1 \abs{u_p(r)}^{p+1} r \intd r = p \int_{0}^{r_p}
\abs{u_p(r)}^{p+1} r \intd r + p \int_{r_p}^{s_p}
\abs{u_p(r)}^{p+1} r \intd r + p \int_{s_p}^{1}
\abs{u_p(r)}^{p+1} r \intd r\\
&= \frac{4e + o(1)}{r_p^{4/(p-1)}} + p(\varepsilon_p^+)^2
u_p(s_p)^{p+1} \int_{\frac{r_p-s_p}{\varepsilon^{+}_p}}^ {0}
\left(1 + z_{p}^{+}(s)/p \right)^{p+1}
\left(\frac{s_p}{\varepsilon^{+}_p} + s\right)\intd s\\ &   \ \ \
\ \ \ + p  (\varepsilon_p^+)^2u_p(s_p)^{p+1}
\int_0^{\frac{1-s_p}{\varepsilon^{+}_p}} \left(1 + z_{p}^{+}(s)/p
\right)^{p+1} \left(\frac{s_p}{\varepsilon^{+}_p} + s  \right)\intd s\\
&= \frac{4e + o(1)}{r_\infty^2 + o(1)} + (u_\infty^2 (\alpha -2) +
o(1)) + u_p(s_p)^2 \int_{0}^{\frac{1-s_p}{\varepsilon^{+}_p}}
\left(
  \frac{s_p}{\varepsilon^{+}_p} + s\right) \left( 1+ z_{p}^{+}(s)/p\right)^{p+1}\intd s\\
&=  \frac{4e + o(1)}{\Bar{t}^2+ o(1)}u_\infty^2 + u_\infty^2
(\alpha -2) + o(1) + u_p(s_p)^2
\int_{0}^{\frac{1-s_p}{\varepsilon^{+}_p}} \left(
  \frac{s_p}{\varepsilon^{+}_p} + s\right) \left( 1+ z_{p}^{+}(s)/p\right)^{p+1} \intd
s.\\
\end{split}
\end{equation}

The inequality $\left( 1 + z_{p}^{+}/p \right)\leq 1$ implies
$\left( 1 +
  z_{p}^{+}/p\right)^{p+1}\leq \left(1 + z_{p}^{+}/p\right)^p$. So we get
\begin{equation*}
p\int_{0}^1 \abs{u_p(r)}^{p+1} r \intd r \leq \frac{4e}{\Bar{t}^2
+ o(1)} u_\infty^2 + u_\infty^2 (\alpha -2) + (u_\infty^2 +
o(1))I_p + o(1).
\end{equation*}

Hence Pohoza\"ev identity becomes
\begin{equation*}
\frac{p (p+1)}{4} \frac{(u_\infty + o(1))^2}{p^2} I_p^2 \leq
\frac{4e+ o(1)}{\Bar{t}^2+ o(1)} u_\infty^2 + u_\infty^2 (\alpha
-2) + (u_\infty^2 + o(1))I_p + o(1).
\end{equation*}
Since $I_p$ is bounded by Lemma~\ref{ipbounded}, passing to the
limit as $p\to +\infty$, we obtain
\begin{equation*}
\frac{u_\infty^2}{4} I_\infty^2 \leq
\frac{4e}{\Bar{t}^2}u_\infty^2 + u_\infty^2 (\alpha -2) +
u_\infty^2 I_\infty.
\end{equation*}

Thus, by previous estimates (as $u_\infty >0$),
$\frac{I_\infty^2}{4} -I_\infty \leq \frac{4e}{\Bar{t}^2} +
(\alpha -2) = \frac{(\alpha -2)^2}{4} + (\alpha -2) =
\frac{(\alpha +
  2)^2}{4} - (\alpha + 2)$.   Since the function $\frac{x^2}{4} -4$ is increasing on
$x\geq 2$ and  we already proved that $I_\infty \geq \alpha + 2$,
we directly get that $I_\infty  = \alpha +2$.
\end{proof}

\begin{Lem}
\label{jpbounded} $\frac{J_p}{p}:= \frac{1}{p}
\int_{0}^{\frac{1-s_p}{\varepsilon^{+}_p}} \left(
\frac{s_p}{\varepsilon^{+}_p} + t \right) \left( 1 +
  \frac{z_{p}^{+}(t)}{p}\right)^p \log (l+ t + o(1))\intd t = o(1)$ as $p\to
+ \infty$.
\end{Lem}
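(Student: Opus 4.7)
My plan is to split the integration range $[0,T_p]$ (where $T_p := (1-s_p)/\varepsilon_p^+ \to +\infty$) at a large fixed $C>0$ and write $J_p = J_p^{(1)} + J_p^{(2)}$ for the integrals over $[0,C]$ and $[C,T_p]$ respectively. The guiding idea is that on $[0,C]$ the logarithmic factor is uniformly bounded, while on the unbounded complement, although $\log T_p$ can be as large as $O(p)$, the mass of the radial profile $(s_p/\varepsilon_p^+ + t)(1+z_p^+(t)/p)^p$ can be made arbitrarily small by taking $C$ large.

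For the near piece: on $[0,C]$ one has $|\log(l + t + o(1))| \le M_C$ uniformly for $p$ large, so $|J_p^{(1)}| \le M_C\, I_p$, which is bounded by Lemma~\ref{ipbounded}. Hence $J_p^{(1)}/p \to 0$ for each fixed $C$.

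For the far piece I use the crude bound $\log(l + t + o(1)) \le \log(l + T_p + 1)$ on $[C, T_p]$ (for $p$ large), yielding
\[
|J_p^{(2)}| \le \log(l + T_p + 1) \cdot \int_C^{T_p} \Bigl(\frac{s_p}{\varepsilon_p^+} + t\Bigr)\Bigl(1 + \frac{z_p^+(t)}{p}\Bigr)^p dt.
\]
By the $\C^1_{\text{loc}}$-convergence $z_p^+ \to \tilde z_l$ on $(-l, +\infty)$ (Proposition~\ref{rescan}) together with $I_p \to \alpha + 2 = \int_0^{+\infty} (l+t)\, e^{\tilde z_l(t)}\, dt$ (Lemma~\ref{lemrate...}), the remaining integral converges, for each fixed $C$ and as $p \to \infty$, to $\int_C^{+\infty} (l+t)\, e^{\tilde z_l(t)}\, dt$, which tends to $0$ as $C \to \infty$ since $\alpha > 2$ makes the full integral finite. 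On the other hand, $-\log \varepsilon_p^+ = \frac{1}{2}(\log p + (p-1)\log u_p(s_p))$, and since $u_p(s_p)$ is bounded (Remark~\ref{rem2}), $\log T_p / p$ is bounded by some constant $M$.

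Combining these: for any $\eta > 0$ fix $C$ with $\int_C^{+\infty} (l+t)\, e^{\tilde z_l(t)}\, dt \le \eta$; then $\limsup_p |J_p^{(2)}|/p \le 2M\eta$ and $J_p^{(1)}/p \to 0$. Letting $\eta \to 0$ gives $J_p/p \to 0$. The main obstacle is the apparent tension between the $O(p)$ growth of $\log T_p$ and the need to kill this factor; it is resolved by the polynomial tail decay $e^{\tilde z_l(t)} \sim C\, t^{-\alpha - 2}$ of the explicit limit profile, which together with $\alpha > 2$ makes the tail mass $\int_C^{+\infty} (l+t)\, e^{\tilde z_l(t)}\, dt$ as small as desired, absorbing the divergent logarithmic factor.
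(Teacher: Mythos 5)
Your proof is correct and follows essentially the same route as the paper's: split at a large fixed radius $R_\varepsilon$, control the near piece by boundedness of $I_p$, bound the logarithm on the far piece by $C_1+C_2p$ via $2\abs{\log\varepsilon_p^+}=\log p+(p-1)\log u_p(s_p)$, and use Lemma~\ref{lemrate...} to make the tail mass $\int_{R_\varepsilon}^{T_p}$ converge to an arbitrarily small quantity. The only cosmetic difference is that the paper computes the tail of the limit profile explicitly as $\frac{2\alpha\beta^\alpha}{\beta^\alpha+(R_\varepsilon+l)^\alpha}$ while you simply invoke integrability from the decay $e^{Z_l(t)}\sim Ct^{-\alpha-2}$; both are equivalent.
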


\begin{proof}
We prove that $\forall \varepsilon >0, \exists p_0 >0 : \forall p
> p_0, \frac{J_p}{p}< \varepsilon$.

Let us fix $\varepsilon >0$ and choose $R_\varepsilon$ such that
\begin{equation}
\label{reps} \frac{2\alpha \beta^\alpha}{\beta^\alpha +
(R_\varepsilon + l)^\alpha} < \frac{\varepsilon}{3 (\log u_\infty
+ 1)}.
\end{equation}
Then, for $p$ large,
\begin{equation*}
\begin{split}
\frac{J_p}{p} &= \frac{1}{p} \int_{0}^{R_\varepsilon} \left(
\frac{s_p}{\varepsilon^{+}_p} + t \right) \left( 1 +
  \frac{z_{p}^{+}(t)}{p}\right)^p \log (l+ t + o(1))\intd t\\ & \ \ \
\ \ \ \ \ \ \  \ \ \ \   \ \  \  +\frac{1}{p}
\int_{R_\varepsilon}^{\frac{1-s_p}{\varepsilon^{+}_p}} \left(
\frac{s_p}{\varepsilon^{+}_p} + t \right) \left( 1 +
  \frac{z_{p}^{+}(t)}{p}\right)^p \log (l+ t + o(1))\intd t\\
&= \frac{J_p'}{p} + \frac{J_p''}{p}.
\end{split}
\end{equation*}
Since $\left( \frac{s_p}{\varepsilon^{+}_p} + t \right) \left( 1 +
  \frac{z_{p}^{+}(t)}{p}\right)^p \log (l+ t + o(1)) \to (l+ t)
e^{\tilde{z}_l(t)}\log (l+t)$ and $R_\varepsilon$ is fixed, we get
$\frac{J_p'}{p}\to 0$. So, there exists $p_0'$ such that, for any
$p> p_0'$, $\frac{J'_p}{p}< \frac{\varepsilon}{3}$.

Then, for $\frac{J_p''}{p}$, we have
\begin{equation*}
\begin{split}
&\log (l+ t + o(1) ) \leq \log (l +
\frac{1-s_p}{\varepsilon^{+}_p} + o(1)) \leq\\ &
\abs{\log (l
\varepsilon^{+}_p +
  1 - s_p + o(\varepsilon^{+}_p))} + \abs{\log \varepsilon^{+}_p} \leq C_1 +
C_2 p
\end{split}
\end{equation*}
 for large $p$. Indeed, the first term is bounded and for the second
one we have
$$ 2\abs{\log \varepsilon^{+}_p} = \log (p
u_p^{p-1}(s_p))= \log p + (p-1) \log u_p(s_p) \leq C_2p.$$ Hence,
\begin{equation*}
\begin{split}
\frac{J_p''}{p} &\leq \frac{C_1 + C_2 p}{p}
\int_{R_\varepsilon}^{\frac{1-s_p}{\varepsilon^{+}_p}}
\left(\frac{s_p}{\varepsilon^{+}_p} + t \right) \left( 1 +
  \frac{z_{p}^{+}(t)}{p}\right)^p \intd t \\
&= o(1) + C_2
\int_{R_\varepsilon}^{\frac{1-s_p}{\varepsilon^{+}_p}} \left(
\frac{s_p}{\varepsilon^{+}_p} + t \right) \left( 1 +
\frac{z_{p}^{+}(t)}{p}\right)^p \intd t.
\end{split}
\end{equation*}
Finally, using Lemma~\ref{lemrate...},
\begin{equation*}
\begin{split}
\int_{R_\varepsilon}^{\frac{1-s_p}{\varepsilon^{+}_p}} \left(
\frac{s_p}{\varepsilon^{+}_p} + t \right) \left( 1 +
  \frac{z_{p}^{+}(t)}{p}\right)^p \intd t &= \int_{0}^{\frac{1-s_p}{\varepsilon^{+}_p}}
\left( \frac{s_p}{\varepsilon^{+}_p} + t \right) \left( 1 +
  \frac{z_{p}^{+}(t)}{p}\right)^p \intd t\\ & \ \ \ \  - \int_{0}^{R_\varepsilon}
\left( \frac{s_p}{\varepsilon^{+}_p} + t \right) \left( 1 +
  \frac{z_{p}^{+}(t)}{p}\right)^p \intd t
\end{split}
\end{equation*}
which converges to $\alpha + 2 - \int_{0}^{R_\varepsilon} (l+ t)
e^{\tilde{z}_l(t)}\intd t = \alpha + 2 - \int_{l}^{R_\varepsilon +
l} t e^ {Z_l(t)}\intd t$.

As $\int_{l}^{R_\varepsilon + l} t e^ {Z_l(t)}\intd t = \alpha + 2
- \frac{2\alpha \beta^\alpha}{\beta^\alpha + (R_\varepsilon +
  l)^\alpha}$, we get $\frac{J_p''}{p}\leq o(1) + C_2 \left( o(1) +
  \frac{2\alpha \beta^\alpha}{\beta^\alpha + (R_\varepsilon +
    l)^\alpha}\right)$. Hence, by definition of $R_\varepsilon$, there
exists $p_0''$ such that for all $p> p_0''$
\begin{equation*}
\frac{J_p''}{p} \leq \frac{\varepsilon}{3} + C_2
\frac{\varepsilon}{3C_2} = \frac{2\varepsilon}{3}.
\end{equation*}
This ends the proof using $p_0 = \min(p_0', p_0'')$.

\end{proof}

\begin{Prop}
\label{propuinf} $u_\infty = e^{\frac{2}{\alpha+2}}$.
\end{Prop}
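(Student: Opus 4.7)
The plan is to extract an additional asymptotic identity by testing the radial ODE $-(u_p'(r) r)' = u_p^p(r)\, r$ against $\log r$ on the interval $(s_p,1)$, which will pair naturally with Lemma~\ref{lemrate...} and Lemma~\ref{jpbounded}. More precisely, I would start from the identity of Lemma~\ref{lemrate..} evaluated at $r = s_p$, namely
\begin{equation*}
-u_p(s_p) \;=\; \int_{s_p}^{1} s\, \log(s)\, u_p^p(s)\intd s,
\end{equation*}
and then perform the usual rescaling change of variable $s = s_p + \varepsilon_p^+ t$ to rewrite the right-hand side in terms of $z_p^+$. After factoring out $u_p(s_p)^p$ and using $(\varepsilon_p^+)^{-2} = p\, u_p(s_p)^{p-1}$, the identity becomes
\begin{equation*}
-1 \;=\; \frac{1}{p}\int_{0}^{(1-s_p)/\varepsilon_p^+}
\left(\tfrac{s_p}{\varepsilon_p^+} + t\right)
\log(s_p + \varepsilon_p^+ t)
\left(1 + \tfrac{z_p^+(t)}{p}\right)^p \intd t.
\end{equation*}

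Next, I would split the logarithm in the same way as in the proof of Lemma~\ref{lemrate1}: since $s_p/\varepsilon_p^+ \to l$ by Lemma~\ref{lemlm}, we have
\begin{equation*}
\log(s_p + \varepsilon_p^+ t) \;=\; \log(l + t + o(1)) \,+\, \log \varepsilon_p^+.
\end{equation*}
Substituting yields $-1 = \frac{J_p}{p} + \frac{\log \varepsilon_p^+}{p}\, I_p$, where $J_p$ and $I_p$ are exactly the quantities defined in Lemma~\ref{jpbounded} and Lemma~\ref{lemrate...}. Those two lemmas give $J_p/p \to 0$ and $I_p \to \alpha + 2$, so the whole limit behavior is driven by $\log \varepsilon_p^+ / p$.

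The final step is the computation of $\lim_{p \to +\infty} \log \varepsilon_p^+ / p$. From $(\varepsilon_p^+)^{-2} = p\, u_p(s_p)^{p-1}$ one gets $\log \varepsilon_p^+ = -\tfrac12\log p - \tfrac{p-1}{2}\log u_p(s_p)$, and dividing by $p$ and using $u_p(s_p) \to u_\infty > 0$ yields $\log \varepsilon_p^+ / p \to -\tfrac12 \log u_\infty$. Passing to the limit in the identity above therefore produces
\begin{equation*}
-1 \;=\; -\tfrac12 (\log u_\infty)(\alpha + 2),
\end{equation*}
which immediately gives $u_\infty = e^{2/(\alpha+2)}$. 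The main obstacle, already handled in the preceding two lemmas, is the control of the logarithmic weight $\log(l+t+o(1))$ against the near-critical integrand $(1+z_p^+/p)^p$ on the unbounded rescaled interval; here only the bulk ($\log \varepsilon_p^+$) part survives, while the variable part is absorbed in $J_p/p = o(1)$.
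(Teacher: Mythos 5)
Your proposal is correct and follows essentially the same route as the paper: it starts from the identity of Lemma~\ref{lemrate..} at $r=s_p$, rescales via $s=s_p+\varepsilon_p^+t$, splits $\log(s_p+\varepsilon_p^+t)=\log(l+t+o(1))+\log\varepsilon_p^+$ exactly as in \eqref{lamerate22}, and concludes by combining $I_p\to\alpha+2$ (Lemma~\ref{lemrate...}) with $J_p/p\to 0$ (Lemma~\ref{jpbounded}). The only cosmetic difference is that you divide by $u_p(s_p)$ at the outset rather than at the end, which streamlines the bookkeeping without changing the argument.
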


\begin{proof}
By Lemma~\ref{lemrate..}, substituting $s= s_p + \varepsilon^{+}_p
t$, we derive
\begin{equation*}
\begin{split}
-u_p(s_p) &= \varepsilon^{+}_p
\int_{0}^{\frac{1-s_p}{\varepsilon^{+}_p}} (s_p +
\varepsilon^{+}_p t ) \log (s_p + \varepsilon^{+}_p t) u_p^p(s_p +
\varepsilon^{+}_p t)\intd t\\
& = (\varepsilon_{p}^+)^2 u_p^p(s_p)
\int_{0}^{\frac{1-s_p}{\varepsilon^{+}_p}} (s_p/ \varepsilon^{+}_p
+ t ) \log (s_p + \varepsilon^{+}_p t) \left( 1 +
z_{p}^{+}(t)/p\right)^p\intd t
.\\
\end{split}
\end{equation*}

Using the same idea as in equation~\eqref{lamerate22}, we get
\begin{equation*}
%\begin{split}
\log(s_p + \varepsilon^{+}_p t) - \log u_p(s_p) = \log(l+ t +
o(1)) + \frac{1}{2} \log(1/p) - \frac{p+1}{2} \log (u_p(s_p)),
%\end{split}
\end{equation*}
i.e.\ $\log(s_p + \varepsilon^{+}_p t) = \log(l+ t + o(1)) +
\frac{1}{2} \log(1/p) - \frac{p-1}{2} \log (u_p(s_p))$. It implies
that
\begin{equation*}
\begin{split}
&-u_p(s_p) = \frac{u_p(s_p)}{p}
\int_{0}^{\frac{1-s_p}{\varepsilon^{+}_p}}\left[(s_p/
\varepsilon^{+}_p + t )  \left( 1 + z_{p}^{+}(t)/p\right)^p \big(
-\frac{p-1}{2} \log
  (u_p(s_p)) +\right.\\
  &\left. \frac{1}{2} \log (1/p)+ \log(l + t + o(1))
\big)\right]\intd t = (-\frac{u_\infty}{2} \log u_\infty +
o(1))I_p + o(1) I_p +
\frac{u_p(s_p)}{p} J_p. \\
\end{split}
\end{equation*}
As $I_p\to \alpha +2$ by Lemma~\ref{lemrate...} and
$\frac{J_p}{p}= o(1)$ by Lemma~\ref{jpbounded}, we get that
$-u_\infty = -\frac{1}{2} u_\infty \log u_\infty (\alpha + 2)$
which ends the proof.
\end{proof}

\textit{Proof of Theorem~\ref{intro3}, Theorem~\ref{intro3bis} and
  Theorem~\ref{intro4}} :
 Proposition~\ref{rescball} and Proposition~\ref{rescan} give
Theorem~\ref{intro3}. Then, combining Proposition~\ref{propuinf}
and Remark~\ref{rem3} we obtain the convergence of
$\norm{u_p^+}_\infty$ (see equation~\eqref{normup+}) which is the
first part of Theorem~\ref{intro3bis}.

Since $r_\infty = \frac{\Bar t}{e^{\frac{2}{\alpha +2}}}$ by
equation~\eqref{linkur}, we get Theorem~\ref{intro4}.

From this result, we directly get the convergence of
$\norm{u_p^-}_\infty$ using Propostion~\ref{propball} (see
equation~\eqref{normup-}) which is the second point of
Theorem~\ref{intro3bis}.

Moreover,  by the equation~\eqref{ipP} and as $I_p\to \alpha +2$,
we get that  $p \int_{B}\abs{\nabla u_p}^2 \to \frac{8\pi
  e}{r_\infty^2} + 4 \pi\alpha  u_\infty^2$.   Using
Proposition~\ref{propuinf}, Remark~\ref{rem3} and
equation~\eqref{linkur}, we deduce~\eqref{ener}.\\
Finally, to prove that $pu_p(x)$ converges to $\gamma\log|x|$, which is the
last part of Theorem~\ref{intro3bis}, let us use the
representation formula
\begin{equation*}
\begin{split}
&pu_p(x) =p\int_B G(x,y)|u_p(y)|^{p-1}u_p(y)dy\\
& =p\left[-\int_{\{|y|\le r_p\}}
G(x,y)|u_p(y)|^pdy+\int_{\{r_p<|y|<1\}} G(x,y)|u_p(y)|^pdy\right]
\end{split}
\end{equation*}
where $G(x,y)$ is the Green function of the unit ball.
For the first term we have
\begin{equation*}
\begin{split}
&\int_{\{|y|<r_p\}} G(x,y)|u_p(y)|^pdy=p\int_0^{2\pi}\int_0^{r_p}G(x,(r\cos\theta,r\sin\theta))|u_p(r)|^prdrd\theta\\
&(\hbox{setting }r=r_p\tau)\\
&=p\int_0^{2\pi}d\theta\int_0^1G(x,(r_p\tau\cos\theta,r_p\tau\sin\theta))
|u_p(r_p\tau)|^p\left(r_p\right)^2\tau d\tau\\
&=\frac
p{r_p^\frac2{p-1}}\int_0^{2\pi}d\theta\int_0^1G(x,(r_p\tau\cos\theta,r_p\tau\sin\theta))
\left|r_p^\frac2{p-1}u_p(r_p\tau)\right|^p\tau d\tau\\
&=\frac p{r_p^\frac2{p-1}}\int_BG(x,r_py)
\left|r_p^\frac2{p-1}u_p(r_p|y|)\right|^p dy\\
&\longrightarrow\frac{8\pi\sqrt e}{r_\infty} G(x,0)=\frac{4\sqrt e}{r_\infty}\log|x|\quad\hbox{in }B\setminus\{0\}.\\
\end{split}
\end{equation*}
For the second term we have, setting $r=\varepsilon_p^+\tau+s_p$, by the third equality in Proposition \ref{propball} and the uniform convergence
of $G(x,y)$ to $G(x,0)$ in $B\setminus\{0\}$,
\begin{equation*}
\begin{split}
&p\int_{\{r_p<|y|<1\}} G(x,y)|u_p(y)|^pdy=p\int_0^{2\pi}\int_{r_p}^1G(x,(r\cos\theta,r\sin\theta))|u_p(r)|^prdrd\theta\\
&=p\int_0^{2\pi}d\theta\int_{\frac{r_p-s_p}{\varepsilon_p^+}}^{\frac{1-s_p}{\varepsilon_p^+}}
G(x,(\varepsilon_p^+\tau+s_p)\cos\theta,(\varepsilon_p^+\tau+s_p)\sin\theta))
|u_p(\varepsilon_p^+\tau+s_p)|^p\left(\varepsilon_p^+\right)^2\left(\tau+\frac{s_p}
{\varepsilon_p^+}\right) d\tau\\
&=||u_p^+||_\infty\int_0^{2\pi}d\theta\int_{\frac{r_p-s_p}{\varepsilon_p^+}}^{\frac{1-s_p}{\varepsilon_p^+}}G(x,(\varepsilon_p^+\tau+s_p)\cos\theta,(\varepsilon_p^+\tau+s_p)\sin\theta))
\left|1+\frac{z_p(\tau)}p\right|^p\left(\tau+\frac{s_p}{\varepsilon_p^+}\right) d\tau\\
\end{split}
\end{equation*}
Using \eqref{normup+}, Lemma \ref{lemrate...}, the fact that
$\frac{r_p-s_p}{\varepsilon_p^+}\rightarrow -l$, $\int_{-l}^\infty
(s+l)e^{Z_l(s)}ds=2\alpha$ and the uniform convergence of
$G(x,(\varepsilon_p^+\tau+s_p)\cos\theta,(\varepsilon_p^+\tau+s_p)\sin\theta))\rightarrow
G(x,0)$ in $B\setminus\{0\}$, as $p\rightarrow+\infty$ we finally
get
\begin{equation*}
\begin{split}
&p\int_{\{r_p<|y|<1\}} G(x,y)|u_p(y)|^pdy\rightarrow4\pi\alpha
e^{\frac{\Bar t} {2(\Bar t + \sqrt e)}}G(x,0)\\
&=2\alpha e^ {\frac{\Bar t}{2(\Bar t + \sqrt e)}} \log|x|
\end{split}
\end{equation*}
So, summing up we get that,
$$pu_p(x)\longrightarrow\left(\frac{4\sqrt e}{r_\infty} +2\alpha e^
{\frac{\Bar t}{2(\Bar t + \sqrt
e)}}\right)\log|x|=\left(4+\frac{12\sqrt e}{\Bar t} \right)e^
{\frac{\Bar t}{2(\Bar t + \sqrt e)}}\log|x|$$ as we wanted to
show.
\begin{flushright}
$\square$
\end{flushright}

\section{Low energy solutions -- asymptotic antisimmetry}

We end by proving \textit{Theorem~\ref{intro1}}. Let us denote by
$w_p$ a family of low energy nodal solutions, i.e. solutions
satisfying \eqref{lowenerg}, having Morse index two.
  In~\cite{GGP}, we have proved that,
under the assumption (B) stated in the introduction, $pw_p$
converges, up to a subsequence, to   $8\pi \sqrt e (G(., x^+) -
G(.,x^-))$ where $G$ is the Green function on $B$ and $x^+$ and
$x^-$ are the limit points of the maximum point $x^+_p$ and the
minimum point $x^-_p$ of $w_p$. Moreover it holds
\begin{equation}
\label{sym...!} \left\{
\begin{aligned}
\frac{\partial G}{\partial x_i}(x^+, x^-)-\frac{\partial
H}{\partial
  x_i}(x^+, x^+)=0,\\
\frac{\partial G}{\partial x_i}(x^-, x^+)-\frac{\partial
H}{\partial
  x_i}(x^-, x^-)=0,
\end{aligned}
\right.
\end{equation}
for $i=1,2$, where $H$ denotes the regular part of $G$. Since the
domain is a ball we have that
$$G(x,y) = -\frac{1}{2\pi} \ln \abs{x-y} +
\frac{1}{2\pi} \ln \abs{y} + \frac{1}{2\pi} \ln \abs{x - y/y^2}$$
 and
$$H(x,y )= G(x,y ) + \frac{1}{2\pi} \ln \abs{x-y}.$$
Since $w_p$ have Morse index $2$, by the symmetry result of
~\cite{pacellaweth} (or \cite{bww} for least energy nodal
solution) we deduce that $w_p$ are foliated Schwarz symmetric,
i.e. they are even with respect to a diameter and monotone in the
polar angle. Moreover, by ~\cite{aftalion}, $w_p$ cannot be radial
and this implies that the maximum point $x^+_p$ and the minimum
point $x^-_p$ are on the same diameter but on different sides with
respect to the centre of the ball. Therefore, up to rotation, we
can assume without loss of generality that $x^+ = (0,a)$ and $x^-
=(0, -b)$ with $a>0$, $b>0$.  Inserting this information in
~\eqref{sym...!} we get the system
\begin{equation*}
\label{sym} \left\{
\begin{aligned}
\frac{-1}{a+b} + \frac{b}{ab+1} - \frac{a}{a^2-1}=0,\\
\frac{1}{a+b}  - \frac{a}{ab+1} -\frac{b}{1 -b^2}=0.
\end{aligned}
\right.
\end{equation*}
whose unique solution is $a=b = \sqrt{-2 + \sqrt 5}$. Hence the
points
 $x^+$ and $x^-$ are antipodal and so the limit function $G(.,x^+)-G(.,x^-)$ is
 even with respect to the diameter passing through $x^+$ and $x^-$ and odd with
 respect to the  orthogonal diameter. Then the assertion of Theorem~\ref{intro1} is proved.
\begin{flushright}
$\square$
\end{flushright}

\begin{Rem}
Let us consider the least energy nodal solutions $\tilde w_p$ of
\eqref{pblP}. By \cite{GGP} we know that they satisfy
\eqref{lowenerg} and by
 \cite{bartweth} we know that they have Morse index two.
 Therefore if we knew that they satisfy condition $(B)$, Theorem~\ref{intro1}
 would apply and we could claim that $\tilde w_p$ are asymptotically antisymmetric with
  respect to a diameter. We believe that this is true but so far we have not been able
  to prove $(B)$ for this kind of solutions.
\end{Rem}

\bibliography{paperGGP}
\bibliographystyle{plain}

\end{document}